\newtheorem{theorem}{Theorem}
\newtheorem{remark}[theorem]{Remark}
\newtheorem{proposition}[theorem]{Proposition}
\newtheorem{lemma}[theorem]{Lemma}
\newtheorem{corollary}[theorem]{Corollary}
\begin{document}
\setlength{\baselineskip}{1.2\baselineskip}

\title [Equivariant Hopf bifurcation with arbitrary pressure laws]
{Equivariant Hopf bifurcation with general pressure laws}

\author{Tong Li$^\star$}
\address{$^\star$Department of Mathematics\\
        The University of Iowa, Iowa City\\
         IA, 52242, USA}
\email{tong-li@uiowa.edu}

\author{Jinghua Yao$^\dag$}
\address{$^\dag$Corresponding author. Department of Mathematics\\
        The University of Iowa, Iowa City\\
         IA, 52242, USA}
\email{jinghua-yao@uiowa.edu}

\maketitle

\begin{abstract}
The equivariant Hopf bifurcation dynamics of a class of system of partial differential equations is carefully studied.
The connections between the current dynamics and fundamental concepts in hyperbolic conservation laws are explained. The unique approximation
property of center manifold reduction function is used in the current work to determine certain parameter in the normal form. The current work generalizes the study of the second
author ([J. Yao, $O(2)$-Hopf bifurcation for a model of cellular shock instability, Physica D, 269 (2014), 63-75.]) and supplies a class of examples of $O(2)$ Hopf bifurcation with two parameters arising
from systems of partial differential equations.

{\bf Keywords:} Spectrum; Normal form; Equivariant Hopf
bifurcation;  Center manifold; Symmetry; Hyperbolicity; Genuine nonlinearity; Commutator; Group action.

{\bf Mathematics Subject Classification(2010):} 37G; 35P; 34G; 34B

\end{abstract}

\maketitle

\section{Introduction}\label{introduction}

\setcounter{equation}{0}
\setcounter{theorem}{0}

In this paper, we generalize the former study of the second author in Yao \cite{Yao} for the equivariant Hopf bifurcation driven by partial differential equations in two aspects: on one hand, we generalize the study in Yao \cite{Yao} to arbitrary nonzero wave numbers and general fluxes, on the other hand, we treat the variations of two parameters in the system after scalings and renaming in one go. The former generalization enables us to find connections of our study with fundamental concepts in the theory of hyperbolic conservation laws while the latter generalization enables us to treat the different effects together. Therefore, the current work makes the effects of different terms for the current dynamics more transparent and supplies a class of systems of partial differential equations which undergo $O(2)$ Hopf bifurcation with two bifurcation parameters and with connection with hyperbolic conservation laws. Later, it will be clear that both aspects are very subtle and nontrivial. The symmetry due to the structure of the system under consideration makes the generalization in the former aspect possible while the idea of using unique approximation property of the center manifold reduction function to determine certain coefficient(s) in normal form makes it possible for us to deal with multidimensional bifurcation parameters in the current work. During the process of carrying out the current generalization, other important points in utilizing the center manifold theory and normal form theory will be emphasized. Another point of the current work is to clarify some possible ambiguity and certain misprints/typos in the work Yao \cite{Yao}. 

To proceed, let us first recall the general form of systems studied in Yao \cite{Yao} as follows
\begin{equation}\label{system}
\begin{cases}
\partial_t \tau-\partial_x u=-a\partial^4_x\tau,\\
\partial_t u-\partial_x\sigma(\tau)=-\delta\partial^2_x
u-\varepsilon\partial^4_x u
\end{cases}
\end{equation}
on the spatial periodic domian $\mathbb{R}^1/[-M, M]$ where $M$ is an arbitrary positive constant.
Here $\tau=\tau(x,t)$ and $u=u(x,t)$ are the scalar unknown functions of space variable $x$ and time variable $t$. The scalar function $\sigma(\tau)$ is usually called
flux function in mathematics and pressure law in physics. The three parameters $a, \delta,\varepsilon$ are diffusion coefficients. 

The spatial domain is $\mathbb{R}^1/[-M, M]$, which means that we have periodic boundary conditions. It is due to this boundary condition that we can use Fourier analysis to study the spectra of the linear operators involved and compute ingredients in the joint use of center manifold reduction and normal form transformation. Without loss of generality and for the ease of exposition, we can always consider the case $M=\pi$ or else we can do the  following scaling and renaming 
$$t\mapsto \bar{t}=\frac{\pi}{M}t,\,x\mapsto \bar{x}=\frac{\pi}{M}x,\, a\mapsto\bar{a}= \frac{\pi^3}{M^3}a,\, \delta\mapsto\bar{\delta}=\frac{\pi}{M}\delta,\, \varepsilon\mapsto\bar{\varepsilon}= \frac{\pi^3}{M^3}\varepsilon.$$
From now on, we make the convention $M=\pi$. After another scaling and renaming 
 $$t\mapsto\tilde t=\varepsilon t, x\mapsto\tilde x=x, u\mapsto \tilde u=u, \tau\mapsto  \tilde \tau=\varepsilon \tau,$$ 
 $$a\mapsto\tilde a= \varepsilon^{-1}a, \sigma(\tau)\mapsto\tilde \sigma(\tilde\tau)=
 \varepsilon^{-1}\sigma(\varepsilon^{-1}\tilde\tau)=\varepsilon^{-1}\sigma(\tau), 
 \delta\mapsto\tilde\delta=\varepsilon^{-1}\delta,$$ we can always assume $\varepsilon=1$. 
We will validate this convention $\varepsilon=1$ now. 
 
Now we observe that any constant state $(\tau_0, u_0)$ is a solution to system (\ref{system}) mathematically. We can always regard the constant state $(0, 0)$ as a physical meaningful state without loss of generality. This is obvious by first choosing 
$(\tau_0, u_0)$ in the physical range and then using the invariance of system (\ref{system}) in the 
translation group actions $u\mapsto u+h$ for $h\in \mathbb{R}^1$ and redefining 
$\sigma(\tau)$ by $\sigma(\tau_0+\tau)$.   Therefore, we are interested in investigating the interplay between the nonlinearities from the flux functions 
and the competing diffusions in the dynamics of system \eqref{system} and giving a complete characterization of the equivariant Hopf bifurcation dynamics based on arbitrary 
nonzero wave numbers $k_0$ and for general flux functions $\sigma(\tau)$ near the solution $(0,0)$. 

Let us say briefly several words about the physical aspect of our study. Of course, systems of form \eqref{system} \textit{without} fourth order diffusions are generic in classical mechanics (\cite{Ba, Da, FW} 
and the references therein), in gas dynamics, for example the $p$-system (see in particular Chapter 2 of 
Dafermos \cite{Da}). However, systems of the same form \textit{with} fourth order diffusion terms are also of interest and appear frequently.  When $M=\infty$, systems of form \eqref{system} are
also connected with the Kuramoto-Sivashinsky and related systems when we seek traveling waves 
solutions (see \cite{EW, Go, KT, Si, SM, Yao}). Another situation where fourth order terms appear is in the use of the vanishing viscosity method, which is classical in the study of hyperbolic conservation laws.  For us, systems of form \eqref{system} also appear as a class of simplified models related to our study on Hopf bifurcations of shock waves (see \cite{PYZ}).
 
To state our results, we first introduce some notations. The first one relates to symmetry. We say an equation $\partial_t U=F(U)$ 
on a Banach space $\mathcal X$ exhibits \textit{$G$-symmetry} or is \textit{$G$-equivariant} with respect to some isometry 
group $G$ if the vector field $F:\mathcal X\mapsto \mathcal X$ is {$G$-equivariant}, i.e., $F(gU)=gF(U)$ on $\mathcal X$ for any action given by $g\in G$. Another notation is related to the admissibility of parameters $a,\delta$ in our system. To avoid ambiguity, we will use $a_c, \delta_c$ where the subindex ``$c$" indicates ``critical and admissible". For any fixed integer 
$k_0\neq0$, we call the set $\mathcal A(k_0)$ defined by 
\begin{equation*}\Big\{(a_c,\delta_c)\in 
\mathbb{R}^2; 0\neq\delta_c=(a_c+1)k_0^2,\, 
\sigma'(0)>a_c^2k_0^6, a_ck^4(k^2-\delta_c)\neq-\sigma'(0)\,\mbox{for}\, k\neq\pm k_0\Big\}
\end{equation*} the \textit{admissible critical configuration set} associated with the wave number $k_0$. A point $(a_c,\delta_c)\in\mathcal A(k_0)$ 
is called an \textit{admissible critical configuration point}. 

There are some hidden requirements on the one variable function $\sigma(\cdot)$ in the definition of $\mathcal A(k_0)$ such as $\sigma'(0)>0$. As we will also do linearization in our later computations, we need that the function $\sigma(\cdot)$ has certain smoothness. Such a necessary but purely technical assumption will be imposed for $\sigma$ around $0$ for the obvious reason that we do the bifurcation analysis around $(\tau, u)=(0,0)$. For our purpose here, $\sigma(\tau)\in C^3$ around $\tau=0$ will be sufficient. But we will assume that
$\sigma(\tau)$ is $C^{\infty}$ in a neighborhood of $\tau=0$ in $\mathbbm R^1$ to avoid this technical issue. We will validate this assumption from now on. Now we are in a position to state our main results in which $(a_c,\delta_c)$ is understood as any admissible critical configuration point and $\mu_1=a-a_c$, $\mu_2=\delta-\delta_c$.

\begin{theorem}\label{thm}
If $\sigma''(0)\neq0$, then system \eqref{system} or equivalently $\eqref{perturb}$ undergoes an $O(2)$ Hopf bifurcation around $(a,\delta, \tau, u)=(a_c,\delta_c, 0, 0)$ in the Hilbert space consisting of functions in $L^2_{per}(-\pi, \pi)$ with zero mean. There are two families of bifurcated rotating waves and a torus of bifurcated standing waves.
\end{theorem}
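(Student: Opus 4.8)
The goal is to verify the hypotheses of the standard equivariant Hopf bifurcation theorem (in the $O(2)$-equivariant setting, as in Golubitsky--Stewart--Schaeffer) for the semilinear parabolic system \eqref{system} linearized about $(0,0)$. My plan is to organize the proof around three verifications: the symmetry structure, the spectral configuration at the critical point $(a_c,\delta_c)$, and the nondegeneracy/transversality conditions that feed into the normal form.

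First I would establish the $O(2)$-equivariance. On the space of zero-mean $L^2_{per}(-\pi,\pi)$ functions, the circle group $SO(2)$ acts by spatial translations $x\mapsto x+\phi$ and the reflection acts by $x\mapsto -x$; because the right-hand side of \eqref{system} is built from $\partial_x$ of functions of $(\tau,u)$ together with even-order diffusions $\partial_x^2,\partial_x^4$, one checks directly that $F(gU)=gF(U)$ for every $g\in O(2)$, so the problem is genuinely $O(2)$-equivariant. Second, I would do the linear spectral analysis via Fourier series: on each Fourier mode $k$ the linearization is a $2\times2$ matrix, and the defining conditions of $\mathcal A(k_0)$ are precisely engineered so that at $(a_c,\delta_c)$ the mode $k=\pm k_0$ carries a pair of purely imaginary eigenvalues $\pm i\omega_0$ with $\omega_0\neq0$ (guaranteed by $\sigma'(0)>a_c^2k_0^6$), while the condition $a_ck^4(k^2-\delta_c)\neq-\sigma'(0)$ for $k\neq\pm k_0$ rules out any other modes sitting on the imaginary axis. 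This produces a four-dimensional center eigenspace on which $O(2)$ acts as the standard representation, the exact algebraic setting for the $O(2)$ Hopf theorem.

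With the critical spectrum in hand, I would verify transversality: as the parameters move to $(a,\delta)=(a_c+\mu_1,\delta_c+\mu_2)$, the critical eigenvalues $\lambda(\mu_1,\mu_2)$ cross the imaginary axis with nonzero speed. Here the two-parameter structure is the novelty, and the definition $\delta_c=(a_c+1)k_0^2$ is what pins the crossing; I would compute $\partial_{\mu_j}\mathrm{Re}\,\lambda$ from the $2\times2$ characteristic polynomial and check it is nonvanishing in at least one parameter direction, which secures genuine crossing. The remaining, and in my view the hardest, ingredient is the cubic normal-form coefficient whose real part governs the direction of bifurcation and the stability of the branches; its sign cannot be read off the linearization and requires the center manifold reduction. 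Rather than computing the reduction function explicitly to high order, I would exploit the \emph{unique approximation property} of the center manifold reduction function (the device emphasized in the introduction) to pin down the relevant coefficient: the quadratic term $\sigma''(0)\neq0$ forces a nonzero contribution through the second-order part of the reduction, which then feeds the cubic resonant coefficient in the normal form. The main obstacle is precisely this bookkeeping—tracking how $\sigma''(0)$ propagates through the quadratic reduction into the cubic normal-form term while respecting the $O(2)$-resonance constraints—and confirming that the resulting coefficient is nonzero so that the abstract equivariant Hopf theorem applies and yields the two families of rotating waves together with the torus of standing waves.

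Finally, I would assemble these pieces: equivariance plus the verified purely-imaginary pair of eigenvalues with the correct $O(2)$ isotypic structure, plus transversality, plus the nondegeneracy of the normal-form coefficient, are exactly the hypotheses of the $O(2)$ equivariant Hopf bifurcation theorem, whose conclusion is the stated existence of two branches of rotating waves (from the two isotropy subgroups $SO(2)$ conjugate pairs) and a circle of standing waves (from the reflection-symmetric isotropy subgroup). The translation to the equivalent perturbed form \eqref{perturb} is routine and only recenters the problem at the origin.
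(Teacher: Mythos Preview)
Your overall strategy---verify $O(2)$-equivariance, establish the spectral picture at $(a_c,\delta_c)$, reduce to a center manifold, and check that the cubic normal-form coefficient is nonzero when $\sigma''(0)\neq 0$---is the paper's approach. But two steps in your outline would not go through as written.

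First, your identification of the reflection symmetry is wrong. You assert that the reflection acts by $x\mapsto -x$ and justify equivariance by saying the right-hand side is ``built from $\partial_x$ of functions of $(\tau,u)$ together with even-order diffusions''. Both equations in \eqref{system} contain first-order spatial derivatives ($\partial_x u$ in the first, $\partial_x\sigma(\tau)$ in the second), so the vector field is \emph{not} invariant under $x\mapsto -x$ alone. The correct reflection, as the paper records in Proposition~\ref{proposition}, is $S(\tau(x),u(x))=(\tau(-x),-u(-x))$: one must flip the sign of $u$ as well. Your ``direct check'' would fail, and since the $S$-action fixes the isotypic decomposition of the center eigenspace (and hence the isotropy lattice that produces the standing-wave branch), this is not a cosmetic slip.

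Second, you invoke the abstract equivariant Hopf theorem as if the reduction from the PDE to a four-dimensional system were automatic. In infinite dimensions this requires the center manifold theorem, and that in turn demands a resolvent bound $\|(i\omega-\mathcal L_c)^{-1}\|_{X\to X}\lesssim|\omega|^{-1}$ for large $|\omega|$. The paper spends most of the proof of Theorem~\ref{plemma} on this estimate; it relies essentially on the zero-mean restriction in $X$ (see Remark~\ref{gcm}) and is not a formality. Your proposal does not mention it. Once these two points are repaired, your sketch of how $\sigma''(0)$ feeds through the quadratic part of the reduction into the cubic resonant coefficient is exactly the mechanism the paper executes explicitly in Section~\ref{analysis}, arriving at $\mathbbm b_{0r}=-6k_0^6\sigma''(0)^2\delta_c\alpha^{-1}$, nonzero precisely under $\sigma''(0)\neq 0$ and $\delta_c\neq 0$.
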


The following theorems explain the dynamics of the system \eqref{system} given by Theorem \ref{thm} more precisely.

\begin{theorem}
System \eqref{system} admits a center manifold reduction with $O(2)$ symmetry near $(\mu_1,\mu_2)=(0,0)$.  If the center space is parametrized by $z_1\xi_0+z_1^*\xi_0^*+z_2\xi_1+z_2\xi_1^*$, the dynamics on the center manifolds has the following form
\begin{equation}\label{normal}
\begin{cases}
\frac{d}{dt}z_1=i\omega_c z_1+ z_1\Big(\mathbbm{a}(\mu_1,\mu_2)+\mathbbm{b}_0|z_1|^2+\mathbbm{c}_0|z_2|^2+O(|\mu_1|,|\mu_2|, |z_1|, |z_2|)\Big)\\
\frac{d}{dt}z_2=i\omega_c z_2+z_2\Big(\mathbbm{a}(\mu_1,\mu_2)+\mathbbm{b}_0|z_2|^2+\mathbbm{c}_0|z_1|^2)+O(|\mu_1|,|\mu_2|, |z_1|, |z_2|)\Big)
\end{cases}
\end{equation}
where the real parts of $\mathbbm a(\mu_1,\mu_2)$, $\mathbbm b_0$ and $\mathbbm c_0$ are given by
\begin{equation}
\mathbbm a_r=\frac{k_0^2}{2}(-\mu_1 k_0^2+\mu_2), \mathbbm b_{0r}=-6k_0^6\sigma''(0)^2\delta_c\alpha^{-1}, \mathbbm c_{0r}=0
\end{equation}
respectively. Here $\omega_c=(\sigma'(0)k_0^2-a_c^2k_0^8)^{1/2}>0$ and $\alpha$ is a positive constant depending on $k_0^2$ the specific value of which is not important for us and the term $O(|\mu_1|,|\mu_2|, |z_1|, z_2)$ is a sum of terms $O((|\mu_1|+|\mu_2|)(|z_1^2|+|z_2^2|))$, $O((|\mu_1^2|+|\mu_2^2|)(|z_1^2|+|z_2^2|))$ and $O(|z_1|^4+|z_2|^4)$. 
\end{theorem}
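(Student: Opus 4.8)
The plan is to combine a Fourier analysis of the linearization with the parameter-dependent equivariant center manifold theorem and the $O(2)$-equivariant normal form theory, and only then to pin down the coefficients by explicit computation. First I would Fourier-expand: on the mode $e^{ikx}$ the linearization of \eqref{system} (with $\varepsilon=1$) acts by the matrix
\[
L_k=\begin{pmatrix} -ak^4 & ik\\ \sigma'(0)\,ik & \delta k^2-k^4\end{pmatrix}.
\]
Since $\operatorname{tr}L_k$ and $\det L_k$ are both real, a purely imaginary pair occurs exactly when $\operatorname{tr}L_k=0$ and $\det L_k>0$. At $(a_c,\delta_c)$ the relation $\delta_c=(a_c+1)k_0^2$ is precisely $\operatorname{tr}L_{k_0}=0$, while $\sigma'(0)>a_c^2k_0^6$ yields $\det L_{k_0}=\sigma'(0)k_0^2-a_c^2k_0^8=\omega_c^2>0$. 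A short check shows $\operatorname{tr}L_k=0$ forces $k=\pm k_0$, so no other mode carries an imaginary pair, and the third admissibility condition $a_ck^4(k^2-\delta_c)\neq-\sigma'(0)$ rules out a spurious zero eigenvalue, giving a genuine spectral gap. Thus the critical spectrum is $\{\pm i\omega_c\}$ coming from the modes $\pm k_0$, whose four eigenvectors span the center space $\mathrm{span}\{\xi_0,\xi_0^*,\xi_1,\xi_1^*\}$; the reflection $x\mapsto-x$ swaps $\pm k_0$ and, together with translations, realizes the standard $O(2)$ action interchanging the two counter-propagating amplitudes.

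With the spectral picture established, I would invoke the $O(2)$-equivariant center manifold theorem to reduce \eqref{system} to a flow on this four-dimensional space, the reduced vector field inheriting $O(2)$-equivariance. The equivariant normal form theorem then forces the truncated dynamics into the form \eqref{normal}: reflection acts as $z_1\leftrightarrow z_2$, which constrains the linear coefficient $\mathbbm a(\mu_1,\mu_2)$ and the self/cross cubic coefficients $\mathbbm b_0,\mathbbm c_0$ to be common to both equations. The linear coefficient is then immediate, since $\mathbbm a_r$ is the parameter derivative of $\operatorname{Re}\lambda=\tfrac12\operatorname{tr}L_{k_0}$; writing $a=a_c+\mu_1$, $\delta=\delta_c+\mu_2$ gives $\operatorname{tr}L_{k_0}=-\mu_1k_0^4+\mu_2k_0^2$, hence $\mathbbm a_r=\tfrac{k_0^2}{2}(-\mu_1k_0^2+\mu_2)$.

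The substance of the theorem is the cubic coefficients. The only nonlinearity is $\partial_x\sigma(\tau)$, with quadratic part $\tfrac12\sigma''(0)\partial_x(\tau^2)$ and cubic part $\tfrac16\sigma'''(0)\partial_x(\tau^3)$. Because the conservative prefactor $\partial_x$ supplies a factor $ik$ upon projection onto the critical eigenvectors, the \emph{direct} cubic term feeds only the imaginary part of $\mathbbm b_0$; the real part $\mathbbm b_{0r}$ must therefore come entirely from the quadratic nonlinearity acting through the center manifold, which is exactly why it is proportional to $\sigma''(0)^2$. This is where the unique approximation property enters: I would solve the homological equation for the quadratic part of the reduction function, which is slaved into the non-critical Fourier modes $k=0$ and $k=\pm2k_0$ generated by $\tau^2$, uniqueness pinning the coefficients down as $-L_k^{-1}$ applied to the projected quadratic terms at those modes. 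Re-substituting into the quadratic nonlinearity and collecting the resonant monomials $z_1|z_1|^2$ and $z_1|z_2|^2$ should produce $\mathbbm b_{0r}=-6k_0^6\sigma''(0)^2\delta_c\alpha^{-1}$ and $\mathbbm c_{0r}=0$; the factor $\delta_c$ and the positive constant $\alpha$ enter through $L_0^{-1}$ and $L_{2k_0}^{-1}$, i.e. through the linear operator evaluated at the slaved modes.

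The hard part will be this last computation: correctly assembling the second-order center-manifold contribution, cleanly separating its real and imaginary parts, and verifying both that the self-interaction gives a strictly negative real coefficient and that the cross-interaction real part cancels. The bookkeeping is delicate because several quadratic-times-quadratic channels (through the modes $0$ and $2k_0$) contribute simultaneously, and one must track the projections onto $\xi_0$ together with the accompanying $ik$ factors so that the purely real contribution $\mathbbm b_{0r}$ and the cancellation $\mathbbm c_{0r}=0$ emerge. Confirming $\alpha>0$ and that the $\sigma'''(0)$ term is confined to the imaginary part of $\mathbbm b_0$ then completes the determination of the normal form.
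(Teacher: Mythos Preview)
Your overall architecture matches the paper's exactly: Fourier spectral analysis of $\mathcal L_c$, verification of the $O(2)$ equivariance, application of the parameter-dependent equivariant center manifold and normal form theorems, and determination of the cubic coefficients by computing the quadratic part of the reduction function and reinserting it into the quadratic nonlinearity. Your trace shortcut for $\mathbbm a_r$ is equivalent to (and a little slicker than) the paper's projection via the adjoint eigenvector $\eta$.

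However, your picture of \emph{which} slaved modes carry the action is wrong, and this matters for understanding why $\mathbbm c_{0r}=0$. There is no contribution from the mode $k=0$: the working space is mean-zero, so $k=0$ is excluded outright, and independently every quadratic forcing into that mode vanishes because $R_{20}$ carries a prefactor $\partial_x$ which annihilates constants. Concretely $R_{20}(\xi_0,\xi_0^*)=R_{20}(\xi_1,\xi_1^*)=R_{20}(\xi_0,\xi_1)=0$, so the corresponding pieces $\Psi_{110000}$, $\Psi_{001100}$, $\Psi_{101000}$ of the reduction function are identically zero. All of the second-order slaving lives on the spatial mode $2k_0$; there is no $L_0^{-1}$ anywhere, and in particular the factor $\delta_c$ in $\mathbbm b_{0r}$ does \emph{not} come from $L_0^{-1}$.

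What actually distinguishes $\mathbbm b_0$ from $\mathbbm c_0$ is the \emph{temporal} frequency of the relevant resolvent, not the spatial mode. The self-interaction coefficient $\mathbbm b_0$ involves $\Psi_{200000}=(2i\omega_c-\mathcal L_c)^{-1}R_{20}(\xi_0,\xi_0)$ at spatial mode $2k_0$; the shift $2i\omega_c$ makes the $\tau$-component of $\Psi_{200000}$ genuinely complex, and when you project $2R_{20}(\xi_0^*,\Psi_{200000})$ onto the adjoint eigenvector $\eta$ the imaginary part of the denominator (which works out to $-12\omega_c\delta_c k_0$ after using $\delta_c=(a_c+1)k_0^2$) is what produces the real factor $-6k_0^6\sigma''(0)^2\delta_c$ in $\mathbbm b_{0r}$. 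By contrast, the cross-interaction coefficient $\mathbbm c_0$ involves $\Psi_{100100}=(-\mathcal L_c)^{-1}\cdot 2R_{20}(\xi_0,\xi_1^*)$, again at spatial mode $2k_0$ but now with \emph{zero} temporal shift; the resulting $\tau$-component of $\Psi_{100100}$ is real, so that $2R_{20}(\xi_1,\Psi_{100100})$ projected via $\eta$ is purely imaginary. Thus $\mathbbm c_{0r}=0$ is not a cancellation between several channels as you anticipate: it is the single relevant channel being purely imaginary for structural reasons. If you go in expecting a delicate cancellation through mode $0$ you will be looking in the wrong place.
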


\begin{theorem}
Let $\vartheta$ be defined as $\vartheta=-\mu_1 k_0^2+\mu_2$.
Parameterizing the solution by $\vartheta$, the following properties hold for system \eqref{system} in a small neighborhood of
$(\mu_1,\mu_2)=(0,0)\in\mathbbm R^2$:
\begin{enumerate}
 \item if $\mathbbm b_{0r}<0$, then (i) it has precisely one equilibrium $U_{\vartheta}$ for
$\vartheta<0$ with $U_0=0$ and this equilibrium is stable; (ii) it possesses for $\vartheta>0$
equilibria $U_{\vartheta}$, bifurcated rotating waves and bifurcated standing waves.
Both the rotating waves and the standing waves have amplitudes $\mathcal{O}(|\vartheta|^{1/2})$ or equivalently $\mathcal{O}((|\mu_1|+|\mu_2|)^{1/2})$.
The equilibria and rotating waves are unstable and the standing waves  are stable.
\item if $\mathbbm b_{0r}>0$, then (i) it has precisely one equilibrium $U_{\vartheta}$ for
$\vartheta>0$ with $U_0=0$ and this equilibrium is unstable; (ii) it possesses for $\vartheta<0$
equilibria $U_{\vartheta}$, bifurcated rotating waves and bifurcated standing waves.
Both the rotating waves and the standing waves have amplitudes $\mathcal{O}(|\vartheta|^{1/2})$.
The standing waves, the rotating waves are unstable and the equilibria  are stable.
\end{enumerate}
\end{theorem}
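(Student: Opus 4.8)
The plan is to extract the entire bifurcation diagram from the truncated normal form \eqref{normal}, using the explicit real parts of the coefficients supplied by the preceding theorem, and then to transfer the conclusions to the full system \eqref{system} through the $O(2)$-equivariant center manifold reduction. First I would pass to amplitude--phase coordinates $z_j=r_je^{i\phi_j}$. Since \eqref{normal} is in Poincar\'e--Birkhoff normal form and is $O(2)$-equivariant, the phases decouple from the amplitudes at the relevant order, leaving the leading amplitude system
\begin{equation*}
\dot r_1=r_1\big(\mathbbm a_r+\mathbbm b_{0r}r_1^2+\mathbbm c_{0r}r_2^2\big),\qquad
\dot r_2=r_2\big(\mathbbm a_r+\mathbbm b_{0r}r_2^2+\mathbbm c_{0r}r_1^2\big),
\end{equation*}
with $\mathbbm a_r=\tfrac{k_0^2}{2}\vartheta$, $\mathbbm b_{0r}=-6k_0^6\sigma''(0)^2\delta_c\alpha^{-1}$ and $\mathbbm c_{0r}=0$. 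Because $k_0^2>0$, we have $\mathrm{sign}(\mathbbm a_r)=\mathrm{sign}(\vartheta)$, which is exactly what couples the abstract linear coefficient to the physical combination $\vartheta=-\mu_1k_0^2+\mu_2$ used to parametrize the branches.

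Next I would classify the relative equilibria of this amplitude system. With $\mathbbm c_{0r}=0$ the two equations decouple into independent pitchfork-type scalar equations whose nontrivial fixed points satisfy $r^2=-\mathbbm a_r/\mathbbm b_{0r}$. This produces four group orbits: the trivial state $(0,0)$; the two rotating waves $(r_*,0)$ and $(0,r_*)$, interchanged by the reflection in $O(2)$ (hence the two families announced in Theorem~\ref{thm}); and the standing wave $(r_*,r_*)$, whose group orbit is the torus of standing waves. All nontrivial branches exist exactly when $\mathbbm a_r/\mathbbm b_{0r}<0$, i.e.\ when $\vartheta$ and $\mathbbm b_{0r}$ carry opposite signs, which is precisely the dichotomy $\vartheta>0$ in case~(1) against $\vartheta<0$ in case~(2). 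Since $r_*=\sqrt{-\mathbbm a_r/\mathbbm b_{0r}}=O(|\mathbbm a_r|^{1/2})$, the stated amplitude scaling $O(|\vartheta|^{1/2})=O((|\mu_1|+|\mu_2|)^{1/2})$ is immediate.

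For the stability assignments I would linearize the amplitude system at each branch, regarding the neutral phase directions from the broken temporal $S^1$ and spatial $O(2)$ symmetries as the expected zero modes along the group orbit that do not enter orbital stability. The resulting Jacobians give real parts $\mathbbm a_r$ at the trivial state; $-2\mathbbm a_r$ and $\mathbbm a_r$ at each rotating wave; and $-2\mathbbm a_r$ (twice) at the standing wave, the last two using $\mathbbm c_{0r}=0$. Thus the trivial state is stable iff $\mathbbm a_r<0$, each rotating wave carries eigenvalues of opposite sign and is therefore always an unstable saddle, and the standing wave is stable iff $\mathbbm a_r>0$. Reading these through $\mathrm{sign}(\mathbbm a_r)=\mathrm{sign}(\vartheta)$ reproduces both cases of the theorem verbatim.

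The main obstacle is not the algebra but the justification that these leading-order reduced computations govern the genuine PDE dynamics. Here I would lean on the $O(2)$-equivariant center manifold reduction of the preceding theorem, which confines the local dynamics near $(a_c,\delta_c,0,0)$ to the finite-dimensional normal form, and then argue that the higher-order remainder in \eqref{normal} does not perturb the hyperbolic branches: since $\sigma''(0)\neq0$ forces $\mathbbm b_{0r}\neq0$, all the eigenvalues above stay bounded away from zero for $\vartheta\neq0$, so the implicit function theorem and persistence of hyperbolic relative equilibria apply. The one structural subtlety to flag is the degeneracy $\mathbbm c_{0r}=0$, which places the problem on a boundary of the generic Golubitsky--Stewart--Schaeffer classification for $O(2)$ Hopf bifurcation; I would stress that, as a consequence, the rotating and standing waves share a common amplitude and bifurcate simultaneously, and their stability is separated solely by the transverse eigenvalue, whose sign is fixed by $\mathbbm b_{0r}$.
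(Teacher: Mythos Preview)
Your proposal is correct and follows essentially the same route as the paper: pass to polar coordinates, decouple the radial from the angular equations, locate the three families of relative equilibria from the radial system, and read off stability from the sign of $\mathbbm b_{0r}$. The only cosmetic difference is that the paper retains the $O(|\mu_1|+|\mu_2|)$ corrections in the radial equations and extracts the equilibria directly on the $O(2)$-invariant subspaces $\{r_2=0\}$, $\{r_1=0\}$, $\{r_1=r_2\}$, whereas you truncate first and then invoke persistence of hyperbolic relative equilibria; your explicit eigenvalue computation is in fact more detailed than the paper's, which simply declares the stability analysis ``straightforward.''
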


The current work in particular enables us to draw the following conclusions given below.

The condition $\sigma''(0)\neq0$ means that the genuine nonlinearity of the characteristic fields of the corresponding
first order system in system \eqref{system} is essential in order to have non-degenerate $O(2)$ Hopf bifurcation dynamics. In view of the fact $\delta_c\neq0$ from the definition of $\mathcal A(k_0)$,we know that the absence of second order diffusion term or the loss of
genuine nonlinearity leads to the degeneracy of the $O(2)$ Hopf bifurcation dynamics. More explanations are given in Section \ref{spectra}.

By examining the expressions of $\mathbbm a$, $\mathbbm b_0$, $\mathbbm c_0$ (see \eqref{mathbbma}, \eqref{mathbbmbr}, \eqref{mathbbmbi} and \eqref{mathbbmc}) and $\omega_c$, we see that they are all functions of $k_0^2$. This fact embodies the symmetry of the system \eqref{system}. The two bifurcation parameters $\mu_1$ and $\mu_2$ determine the dynamics through a combination $-\mu_1 k_0^2+\mu_2$. This is due to that fact that $\mu_1$ enters the bifurcation equation \eqref{bs} through the operator $-\mu_1\partial_x^4$ which is two order higher than $-\mu_2\partial_x^2$ in differentiation.

Another interesting and surprising phenomenon is that $\sigma''(0)$ enters the determining parameters $\mathbbm b_0$ and $\mathbbm c_0$ through $\sigma''(0)^2$ while $\sigma'''(0)$ only appears in $\mathbbm c_0$. The direct consequences are that the stability of the bifurcated waves does not depend on the sign of $\sigma''(0)$ as long as it does not vanish and that $\sigma'''(0)$ only enters the angular equations in the reduced dynamics. Of course, $\sigma'(0)$, $\sigma''(0)$ and $\sigma'''(0)$ come from the expansion of the flux function $\sigma(\tau)$ around zero and correspond to linear term, second and third order nonlinear terms in the bifurcation equation \eqref{bs}. It is well-known that $\sigma'(0)>0$ and $\sigma''(0)\neq0$ correspond to hypebolicity and genuine nonlinearity for the first order system around $\tau=0$. However, we do not know at this writing what is the counterpart in hyperbolic conservation laws for $\sigma'''(\tau)$.

In the current work, we supply a class of systems from a system of partial differential equations which undergo $O(2)$ Hopf bifurcation with two bifurcation parameters involved. By our choice of working spaces and definition of admissible critical configuration points, we excluded the possibility that $0\in\sigma(\mathcal L_c)$ (see Section \ref{spectra} for notations). As a consequence, we do not consider here the so-called zero-Hopf bifurcation scenario in which the spectral set of the linear operator involved should contain the number zero. However, we remark that the zero-Hopf bifurcation itself is very interesting.

The former study of the second author in \cite{Yao} corresponds to a very special case in which $\sigma(\tau)=1+c^2\tau+\tau^2$, $k_0=1$, $\mu_1=0$, $\mu_2=\mu\in\mathbbm R^1$ and $(a,\delta)$ is understood as admissible critical configuration points. If we make these identifications, the coefficients $\mathbbm a$ and $\mathbbm c_0$ here reduce to  $\mathbbm a$ and $\mathbbm c$ in \cite{Yao} respectively. However, there is an error in the expression of $\mathbbm b$ in \cite{Yao} which we would like to correct here though it does not affect the conclusions there at all. We missed the complex unit $i$ in the computation of $\mathbbm b$ in page 72 of \cite{Yao} and we made a mistake when we tried to get the real and imaginary parts of
$\mathbbm b$ in page 73 of \cite{Yao} though we computed $\mathbbm b$ correctly. The correct $\mathbbm b$ in \cite{Yao} should be
$\mathbbm b=\frac{-6(a+1)-\omega^{-1}(48a-15a^2)i}{(48a-15a^2)^2+36(a+1)^2\omega^2}$, which is consistent with the $\mathbbm b_0$ here.
Meanwhile, the space $Y$ in \cite{Yao} consist of functions in $H^2_{per}(-\pi,\pi)$ with mean zero, which gave better result as the bifurcation dynamics occurs in $Y$. The reason is that
the nonlinearity $R(U,\mu)$ in \cite{Yao} verifies the assumptions for the nonlinearities (i.e., (1)) in the center manifold theorem Theorem \ref{center manifold theorem} with such a choice. If $\mu_1\neq 0$, we can only choose $Y$ as in the current work as fourth order derivative involves in $R(U,\mu_1,\mu_2)$ here. The extension from one bifurcation parameter to two bifurcation parameters also bring us great computational complexity and differences. Our strategies to get the final preferred normal form with determined coefficients could serve as an example to deal with bifurcations with more than one parameters.

For results using  center manifold theory, see Bressan \cite{Br1}, Carr \cite{Ca}, Henry \cite{He}, Haragus and Iooss \cite{HI}, Chicone \cite{Ch}, Golubitsky-Stewart-Schaeffer \cite{GSS}, Iooss and Adelmeyer \cite{IA}, Wittenberg and Holmes \cite{WH} and the references therein. In particular, see the works Bianchini and Bressan \cite{BB}, Texier and Zumbrun \cite{TZ1, TZ2} in the study of conservation laws and viscous traveling waves, see Nakanishi and Schlag \cite{NS} for applications in the dispersive equations.

We organize the remaining part of the current paper as follows: we do spectral analysis in Section \ref{spectra} and study symmetry in Section \ref{symmetry} while computations and analysis are done in
Section \ref{analysis}.

\textbf{Convention.} We set $\varepsilon=1$ and $M=\pi$ and $\mathcal L_c=\mathcal L(a_c,\delta_c)$ for convenience. We will use ``$*$" to denote
``complex conjugate", i.e., for $z\in \mathbb{C}$, $z^*$ means the
complex conjugate of $z$; for complex numbers, we use subindices $r$ and $i$ to indicate their real and imaginary parts respectively; for two
nonnegative quantities, ``$A\lesssim B$" means ``$A\leq CB$" for
some constant $C>0$. For $U=\begin{pmatrix}u_1\\u_2\end{pmatrix},
V=\begin{pmatrix}v_1\\v_2 \end{pmatrix}\in \mathbb{C}^2$, ``$\langle
U, V \rangle" \,\mbox{means}\, ``\int_{-\pi}^{\pi} u_1 v_1^* + u_2 v_2^*\,dx$". We do not distinguish $(\tau, u)$ and $\begin{pmatrix}\tau\\u\end{pmatrix}$.
We also adopt the standard big $O$ and small $o$ notations for limiting processes. This should not be confused with the group $O(2)$. For a vector $V$, we use $v_j$ or $v^{(j)}$ to represent its components. We use ``$[\cdot\,\, , \cdot ]$" to denote commutator: $[F, G]=FG-GF$ for F, G being functions, symbols or operators.
For a linear operator $\mathcal{L}: \mathcal X\mapsto \mathcal X$ on some Banach space $\mathcal X$, we use $\sigma(\mathcal{L})$, $\rho(\mathcal{L})$ to denote
its spectral set and resolvent set. Further, $\sigma(\mathcal{L})=\sigma_c(\mathcal{L})\cup\sigma_s(\mathcal{L})\cup\sigma_u(\mathcal{L})$, i.e.,
$\sigma(\mathcal{L})$ is the union of the center spectral set $\sigma_c(\mathcal{L})$, the stable spectral set $\sigma_s(\mathcal{L})$ and the unstable
spectral set $\sigma_u(\mathcal{L})$. The hyperbolic space is given by $\mathcal X_h:=\mathcal X_s \cup \mathcal X_u$. 

\section{Spectra}\label{spectra}

\setcounter{equation}{0}
\setcounter{theorem}{0}

Now we regard $(\tau, u)$ as the perturbation variables around the state $(0, 0)$ and get the following nonlinear perturbation system by Taylor expansion
\begin{equation}\label{perturb}
\begin{cases}
\partial_t \tau-\partial_x u=-a\partial^4_x\tau\\
\partial_t u-\sigma'(0)\partial_x\tau=-\delta\partial^2_x u-\partial^4_x u
+\partial_x\Big( \frac{\sigma''(0)}{2}\tau^2 +  \frac{\sigma'''(0)}{6}\tau^3 +\Gamma(\tau)\Big)
\end{cases}
\end{equation}
where $\Gamma(\tau):=\sigma(\tau)-\sigma(0)- \frac{\sigma''(0)}{2}\tau^2 - \frac{\sigma'''(0)}{6}\tau^3$ and $\Gamma(\tau)=O(|\tau|^4)$ when $|\tau|$ is small. 
We use the identifications $U=\begin{pmatrix} \tau \\ u
\end{pmatrix}$, $\mathcal{L}(a,\delta)=\begin{pmatrix}-a\partial_x^4 & \partial_x \\
\sigma'(0)\partial_x & -\delta\partial_x^2-\partial^4_x
\end{pmatrix}$ and $\mathcal{N}\begin{pmatrix} \tau \\ u
\end{pmatrix}=\begin{pmatrix} 0\\ \partial_x\Big( \frac{\sigma''(0)}{2}\tau^2 +  \frac{\sigma'''(0)}{6}\tau^3 +\Gamma(\tau)\Big)
\end{pmatrix}$ to write the nonlinear perturbation system \eqref{perturb} into an operator equation $\partial_t U=\mathcal{L}(a,\delta)U+ \mathcal{N}(U)$.

Now we proceed to fix the working spaces. Here $\mathcal L$ is a fourth order linear differential operator on the spatial periodic domain $\mathbb{R}^1/[-\pi, \pi]$ and we need to work on a space triplet involving $L^2_{per}(-\pi, \pi)$. For our specific purpose in the current work, we will work with the space triplet $Z\subset Y\subset X$ given below:
$$Y=X=\{U\in L^2_{per}(-\pi, \pi); \frac{1}{2\pi}\int_{-\pi}^{\pi}U\,dx=0\},$$
$$Z=\{U\in H^4_{per}(-\pi, \pi); \frac{1}{2\pi}\int_{-\pi}^{\pi}U\,dx=0\}.$$
The above choice is not at random and has important implications. In certain cases, the above choice of space triplet does not give best results. 
\begin{remark}
The mean zero restriction comes naturally from the conservative form of system \eqref{system}.  In fact, if we seek solutions on the periodic 
Sobolev spaces, any solution $(\tau, u)$ is conserved due to the conserved form of the original system \eqref{system} as follows
\begin{equation*}
\partial_t\begin{pmatrix} \tau \\ u
\end{pmatrix}=\partial_x\begin{pmatrix}u-a\partial_x^3  \\
\sigma(\tau)-\delta \partial_x u-\partial_x^3 u
\end{pmatrix}.
\end{equation*}
\end{remark}

Now we explain the definition of $\mathcal A(k_0)$ and assumptions on $\sigma(\cdot)$. For this purpose, we first analyze the spectrum of the linear differential operator $\mathcal L(a,\delta)$ considered on $X$ with domain $Z$. To this end, we can proceed by Fourier analysis as we are working on periodic domains.
After Fourier transformation, the differential operator is represented by
$$M_k=\begin{pmatrix}-ak^4 & ik \\
\sigma'(0)k i & \delta k^2-k^4
\end{pmatrix},\,\, k\in\mathbb{Z}, \, k\not=0.$$
Therefore, we have $\sigma(\mathcal{L})=\cup_{k\in\mathbb{Z}, \, k\not=0}\sigma(M_k)$.
The mode $k=0$ is not included in the above union due to the mean zero restriction in the definition of $X$.
The eigenvalues of $M_k$ for $k\not =0$ are given by 
\begin{equation}\label{character}
\det (\lambda-M_k)=\lambda^2+\Big( (a+1)k^4-\delta k^2 \Big)\lambda+ a k^4(k^4-\delta k^2)+\sigma'(0)k^2=0.
\end{equation}

To expect Hopf bifurcation, we need for $k_0\not=0$, $a_c$ and $\delta_c$ that
$$(a_c+1)k_0^4-\delta_0 k_0^2=0,\,\, \mbox{and}\,\, a_c k_0^4(k_0^4-\delta_c k_0^2)+\sigma'(0)k_0^2>0,$$
which is equivalent to
\begin{equation}\label{condition}
(a_c+1)k_0^2-\delta_c=0,\,\, \mbox{and}\,\, \sigma'(0)-a_c^2k_0^6>0.
\end{equation}

The condition \eqref{condition} means that $M_{k_0}$ and $M_{-k_0}$  contributes a pair of complex conjugate eigenvalues to $\sigma_c(\mathcal L)$. Meanwhile, we notice that the characteristic equations of $M_k$ and $M_{-k}$ coincide as $k$ enters these equations through $k^2$. Later, we will see that this is due to the $O(2)$-symmetry of our system. Also, for any $a_c$ and $\delta_c$ satisfying \eqref{condition}, the coefficient $(a_c+1)k^4-\delta_c k^2$ of the first order term in $\lambda$ in \eqref{character} never vanishes for any nonzero $k$ such that $|k|\not=|k_0|$. Therefore, there are no other pairs of complex conjugate eigenvalues  of $\mathcal L(a_c, \delta_c)$. The remaining condition that $a_ck^4(k^2-\delta_c)\not=-\sigma'(0)$ for any nonzero integer  $k\neq \pm k_0$ in the definition of $\mathcal A(k_0)$ enables us to maintain an $O(2)$ Hopf bifurcation scenario by excluding the possibility that $0\in \sigma(\mathcal L(a,\delta))$.  By the above analysis, we get the following proposition:
\begin{proposition}\label{cp}
$\sigma_c(\mathcal L_c)=\{i\omega_c, -i\omega_c\}$ where $\omega_c=\omega(a_c, \delta_c, k_0):=(\sigma'(0)k_0^2-a_c^2k_0^8)^{1/2}>0$.
\end{proposition}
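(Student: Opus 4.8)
The plan is to exploit the Fourier-mode decomposition $\sigma(\mathcal L_c)=\cup_{k\in\mathbb Z,\,k\neq0}\sigma(M_k)$ already established above, and to read off which modes place eigenvalues on the imaginary axis. Since each $M_k$ is a $2\times2$ matrix, its two eigenvalues are the roots of the quadratic \eqref{character}, which I write as $\lambda^2+p_k\lambda+q_k=0$ with linear coefficient $p_k=(a_c+1)k^4-\delta_ck^2$ and constant term $q_k=a_ck^4(k^4-\delta_ck^2)+\sigma'(0)k^2$. The whole argument reduces to determining, for each $k\neq0$, when this quadratic with real coefficients has a root of zero real part.

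First I would treat the critical modes $k=\pm k_0$. By the first relation in \eqref{condition} the linear coefficient $p_{\pm k_0}$ vanishes, so the two roots are $\pm\sqrt{-q_{k_0}}$. Substituting $\delta_c=(a_c+1)k_0^2$ gives $k_0^4-\delta_ck_0^2=-a_ck_0^4$ and hence $q_{k_0}=\sigma'(0)k_0^2-a_c^2k_0^8=\omega_c^2>0$, the positivity being exactly the second relation in \eqref{condition} (equivalently $\sigma'(0)>a_c^2k_0^6$). Thus $M_{k_0}$ and $M_{-k_0}$ each contribute the conjugate pair $\{i\omega_c,-i\omega_c\}$, yielding the inclusion $\{i\omega_c,-i\omega_c\}\subseteq\sigma_c(\mathcal L_c)$.

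Next I would show that no other mode meets the imaginary axis. Fix $k\neq0$ with $|k|\neq|k_0|$ and write $p_k=k^2\big((a_c+1)k^2-\delta_c\big)$. Because $\delta_c\neq0$ in the definition of $\mathcal A(k_0)$ forces $a_c+1\neq0$ (otherwise $\delta_c=(a_c+1)k_0^2=0$), the bracket vanishes only when $k^2=\delta_c/(a_c+1)=k_0^2$, which is excluded; hence $p_k\neq0$. For a quadratic with real coefficients and nonzero linear term, a root $i\omega$ with $\omega\neq0$ is impossible: inserting $\lambda=i\omega$ into \eqref{character} and taking the imaginary part forces $p_k\omega=0$. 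It then remains only to exclude the root $\lambda=0$, which occurs precisely when $q_k=0$; factoring $q_k=k^2\big(a_ck^4(k^2-\delta_c)+\sigma'(0)\big)$, this is ruled out for $k\neq\pm k_0$ by the admissibility condition $a_ck^4(k^2-\delta_c)\neq-\sigma'(0)$ in the definition of $\mathcal A(k_0)$. Therefore $\sigma(M_k)$ contains no purely imaginary point whenever $|k|\neq|k_0|$.

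Combining the two cases gives $\sigma_c(\mathcal L_c)=\{i\omega_c,-i\omega_c\}$. There is no genuine obstacle beyond careful bookkeeping; the one point requiring attention is extracting $a_c+1\neq0$ from $\delta_c\neq0$ so that the linear-coefficient argument cleanly isolates $|k|=|k_0|$, together with invoking the precise admissibility inequality to discard a zero eigenvalue. For completeness I would also note that $Z=H^4_{per}(-\pi,\pi)\hookrightarrow X$ compactly, so $\mathcal L_c$ has compact resolvent and $\sigma(\mathcal L_c)$ is pure point spectrum with no accumulation on the imaginary axis; this justifies that the center spectral set is exactly the finite collection of imaginary-axis eigenvalues computed mode by mode.
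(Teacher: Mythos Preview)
Your proposal is correct and follows essentially the same approach as the paper, which establishes the proposition via the discussion immediately preceding its statement: mode-by-mode analysis of the quadratic \eqref{character}, noting that the linear coefficient vanishes only for $|k|=|k_0|$ and that the admissibility condition in $\mathcal A(k_0)$ excludes a zero eigenvalue for all other modes. Your write-up is in fact more careful than the paper's, since you make explicit the step $\delta_c\neq0\Rightarrow a_c+1\neq0$ (which the paper uses tacitly) and add the compact-resolvent remark justifying that the spectrum is pure point.
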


\begin{remark}
The mean zero requirement in the current space triplet is crucial not only in excluding the possibility $0\in \sigma_0(\mathcal L_c)$ to guarantee the equivariant Hopf bifurcation spectrum scenario but also in validating the resolvent estimate in the center manifold theorem to guarantee the existence of center manifold. See Remark \ref{gcm}.
\end{remark}

Further, we have the following simple spectral gap lemma concerning the spectrum of $\sigma(\mathcal L_c)$ which is needed to verify the assumptions in center manifold theory.

\begin{lemma}\label{gap}
There exists a positive constant $\gamma>0$, such that $\sup\{Re\, \lambda; \lambda\in \sigma_s(\mathcal L_c)\}<-\gamma$ and  $\inf\{Re\, \lambda; \lambda\in \sigma_u(\mathcal L_c)\}>\gamma$.
\end{lemma}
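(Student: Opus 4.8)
The plan is to exploit the Fourier diagonalization $\sigma(\mathcal L_c)=\bigcup_{k\in\mathbb Z,\,k\neq 0}\sigma(M_k)$ already established, and to reduce the two one-sided estimates of the lemma to a single statement: that the real parts of all eigenvalues arising from modes $k\neq\pm k_0$ are uniformly bounded away from the imaginary axis. Indeed, if I can produce $\gamma>0$ with $|\mathrm{Re}\,\lambda|>\gamma$ for every $\lambda\in\sigma(M_k)$ and every $k\neq 0,\pm k_0$, then each such $\lambda$ with $\mathrm{Re}\,\lambda<0$ lies in $\sigma_s(\mathcal L_c)$ and obeys $\mathrm{Re}\,\lambda<-\gamma$, while each such $\lambda$ with $\mathrm{Re}\,\lambda>0$ lies in $\sigma_u(\mathcal L_c)$ and obeys $\mathrm{Re}\,\lambda>\gamma$. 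Since by Proposition \ref{cp} the only modes contributing center spectrum are $k=\pm k_0$, these two bounds are exactly the assertion of the lemma.

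To set up the estimate I would record $\mathrm{tr}\,M_k=-p(k)$ and $\det M_k=q(k)$, where $p(k)=(a_c+1)k^2(k^2-k_0^2)$ and $q(k)=k^2\big(a_c k^4(k^2-\delta_c)+\sigma'(0)\big)$ are both real. Because the characteristic polynomial \eqref{character} has real coefficients, for each $k$ the two eigenvalues form either a complex conjugate pair with common real part $-p(k)/2$ or a pair of reals with product $q(k)$. For $k\neq\pm k_0$ the admissibility conditions give $p(k)\neq 0$ (here I use $a_c+1\neq 0$, which follows from $\delta_c=(a_c+1)k_0^2\neq 0$) and $q(k)\neq 0$ (the condition $a_c k^4(k^2-\delta_c)\neq-\sigma'(0)$); hence in the complex case both real parts are $-p(k)/2\neq 0$, and in the real case neither root can vanish. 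So every eigenvalue off the modes $\pm k_0$ has nonzero real part. This is the qualitative input; the work is to make it uniform in $k$.

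I would then split the index set into the finite range $0<|k|\le K$ and the tail $|k|>K$, with the threshold $K$ to be fixed. On the finite range there are finitely many eigenvalues, each with nonzero real part by the previous paragraph, so $\gamma_0:=\min|\mathrm{Re}\,\lambda|$ over this range is a positive number. For the tail I would expand the roots of \eqref{character} as $|k|\to\infty$: writing the discriminant as $p(k)^2-4q(k)=(a_c-1)^2k^8+O(k^6)$, the two roots behave to leading order like $-a_c k^4$ and $-k^4$, so $|\mathrm{Re}\,\lambda|\ge c\,\min(|a_c|,1)\,k^4$ once $|k|$ is large; choosing $K$ large makes this exceed some $\gamma_1>0$ on the entire tail. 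Setting $\gamma=\tfrac12\min(\gamma_0,\gamma_1)$ then finishes the argument.

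The main obstacle is the tail estimate, and it is precisely there that the nondegeneracy of the top-order diffusion enters: the leading coefficients of the two real-part branches are $-a_c$ and $-1$, so the estimate needs $a_c\neq 0$, which is consistent with $a$ being a genuine fourth-order diffusion coefficient and is implicit in the admissible configuration. Were $a_c=0$, the branch $-a_c k^4$ would collapse and the subleading analysis produces a root $\sim-\sigma'(0)/k^2\to 0^-$, i.e. a sequence of stable eigenvalues accumulating at the imaginary axis with no spectral gap (indeed this case is already incompatible with Proposition \ref{cp}); so part of the care is to invoke $a_c\neq 0$ exactly where it is used. I would also treat the borderline algebraic case $a_c=1$, where the leading term of the discriminant cancels, by observing that there both real parts are still $\sim-k^4\to-\infty$, whether the roots remain real or merge into a conjugate pair with real part $-p(k)/2$.
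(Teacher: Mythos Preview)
Your strategy coincides with the paper's: both arguments first note that for each fixed $k\neq\pm k_0$ the eigenvalues of $M_k$ avoid the imaginary axis, and then control the tail $|k|\to\infty$ by the asymptotics of the roots of \eqref{character}. The paper is terse --- it simply asserts that ``the real parts of the roots can only tend to $\pm\infty$'' --- and your explicit identification of the two branches as $\sim -a_ck^4$ and $\sim -k^4$ is a sharper version of the same step, with the finite-range/tail split made explicit.

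There is, however, a genuine gap: your claim that $a_c\neq 0$ is ``implicit in the admissible configuration'' and ``incompatible with Proposition~\ref{cp}'' is incorrect. The definition of $\mathcal A(k_0)$ only forces $a_c\neq -1$ (through $\delta_c=(a_c+1)k_0^2\neq 0$), together with $\sigma'(0)>a_c^2k_0^6$ and the nonresonance condition; the paper in fact explicitly lists $a_c=0$ among the admissible cases, and Proposition~\ref{cp} holds there with $\omega_c=k_0\sqrt{\sigma'(0)}$. Your own tail computation then shows that at $a_c=0$ one root behaves like $-\sigma'(0)/(k^2-k_0^2)\to 0^{-}$, so the stable spectrum accumulates at the origin and the lemma as stated \emph{fails} in that case. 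This is an inconsistency already present in the paper --- its one-line tail argument tacitly requires $a_c\neq 0$ as well --- but you cannot close your proof by appealing to hypotheses that are not actually imposed; you should either add $a_c\neq 0$ as an explicit assumption or flag the degenerate case separately.
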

\begin{proof}
We need to consider the distribution of the roots of the equations
$$\lambda^2+\Big( (a_c+1)k^4-\delta_c k^2 \Big)\lambda+ a_c k^4(k^4-\delta_c k^2)+\sigma'(0)k^2=0$$
for nonzero $k\neq\pm k_0$. By symmetry, we just need consider the case $|k_0|\neq k\in\mathbb N$. 
By Proposition \ref{cp}, we know that roots of the above equation for $k\neq |k_0|$ do not lie in the imaginary axis. Hence we just need to make sure that there is no accumulation of spectra to the imaginary axis in the limit $k\rightarrow +\infty$. This is obvious by writing down the solutions explicitly: the real parts of the roots can only tend to $\pm\infty$. 
\end{proof}

\begin{remark}
Due to the mean zero assumption in the space triplet and the definition of admissible critical configuration point, the so-called \textit{zero-Hopf} bifurcation can not happen for the obvious reason that $0$ is not in  $\sigma(\mathcal L_c)$. It is very interest to make a study under the spectral scenario for zero-Hopf bifurcation.
\end{remark}

Before proceed further and to be rigorous, we claim that $\mathcal A(k_0)\neq\emptyset$ for any $k_0\neq0$ with suitable $\sigma(\tau)$ and $\mathcal \cup_{k_0\neq 0}A(k_0)$ contains all the admissible critical configuration points for the equivariant Hopf bifurcation. The latter part in the claim is trivial. The validity of former part is achieved by simple counting. However, the following analysis is useful for us to have a good feeling about the requirements. For $(a_c,\delta_c)$ to be an admissible critical configuration point, we need that $\delta_c=(a_c+1)k_0^2$, $\sigma'(0)>a_c^2k_0^6$ and there holds $a_c k^4(k^2-(a_c+1)k_0^2)\neq-\sigma'(0)$ for any integer $k\neq k_0$. For any given $\sigma'(0)>0$ and $k_0\neq0$, we have the following observations: 

(1) We see easily that there are infinitely many paris of $(a_c,\delta_c)$: first pick $a_c>0$ so small that $\sigma'(0)>a_c^2k_0^6$ and $a_ck_0^2<1$, then we have $a_c k^4(k^2-(a_c+1)k_0^2)>0>-\sigma'(0)$ for $|k|>|k_0|$, finally adjust $a_c>0$ so small such that the following finite number of relations hold: $a_ck^4(k^2-(a_c+1)k_0^2)>-\sigma'(0)$ for $1\leq k\leq k_0-1$. 

(2) For the case $a_c=0$, we have $\delta_c=k_0^2$ and it is of course true that $a_c k^4(k^2-(a_c+1)k_0^2)=0\neq-\sigma'(0)<0$. 

(3) For the case $a_c<0$, we can first choose $|a_c|$ small so that $\sigma'(0)>a_c^2k_0^6$ and $|a_ck_0^2|<1$, then we have  $a_c k^4(k^2-(a_c+1)k_0^2)\neq-\sigma'(0)$ for $1\geq |k|\leq k_0-1$ and there exists a positive integer $l_0$ such that $a_c k^4(k^2-(a_c+1)k_0^2)<-\sigma'(0)$ for $|k|\geq l_0$. It is possible that $a_c k^4(k^2-(a_c+1)k_0^2)\neq-\sigma'(0)$ for some $k_0+1\leq|k|\leq l_0-1$. All in all, the set $\{a_c k^4(k^2-(a_c+1)k_0^2); k\in \mathbb N, k\neq k_0\}$ is at most countable. However this is not a problem since all we care is if we have an admissible critical configuration point around which we can proceed our bifurcation analysis.

Now, let us explain how $\sigma(\tau)$ comes into play. For this purpose, let us first do some simple computations regarding the first order system
\begin{equation}\label{first}
\begin{cases}
\partial_t \tau-\partial_x u=0,\\
\partial_t u-\partial_x\sigma(\tau)=0.
\end{cases}
\end{equation}
The Jacobian matrix of the flux function $F(U)=\begin{pmatrix}-u\\ -\sigma(\tau)\end{pmatrix}$ with respect to $U=\begin{pmatrix}\tau\\ u\end{pmatrix}$ is
$D_{U}\begin{pmatrix}-u\\-\sigma(\tau)\end{pmatrix}=\begin{pmatrix}0 &- 1 \\
-\sigma'(\tau)& 0
\end{pmatrix}$. If $\sigma'(\tau)>0$ for all $\tau$ under consideration, we get the characteristic pairs of \eqref{first} as follows:
$$\lambda_1(\tau, u)=-\sqrt{\sigma'(\tau)},\, V_1(\tau, u)=\begin{pmatrix}1\\ \sqrt{\sigma'(\tau)}\end{pmatrix},$$
$$\lambda_2(\tau, u)=\sqrt{\sigma'(\tau)},\, V_2(\tau, u)=\begin{pmatrix}1\\ -\sqrt{\sigma'(\tau)}\end{pmatrix}.$$
Easy computations also show that 
$$\nabla_U \lambda_1(\tau,u)\cdot V_1(\tau, u)=-\frac{\sigma''(\tau)}{2\sqrt{\sigma'(\tau)}},\,\,\nabla_U \lambda_2(\tau,u)\cdot V_2(\tau, u)=\frac{\sigma''(\tau)}{2\sqrt{\sigma'(\tau)}}.$$
We wil see that $\sigma'(0)$, $\sigma''(0)$, $\sigma'''(0)$ explicitly come into play in the current work. The condition $\sigma'(0)>a_c^2k_0^6$ in \eqref{condition} forces $\sigma'(0)>0$. Hence $\sigma'(0)>0$ is necessary for equivariant Hopf bifurcation to occur. Interestingly, the positivity of $\sigma'(0)$ corresponds to the \textit{hyperbolicity} of the first order system \eqref{first} 
at $\tau=0$. By continuity, we would have $\sigma'(\tau)>\frac{\sigma'(0)}{2}>0$ if $|\tau|$ is small in amplitude. Hence hyperbolicity remains true under the assumption $\sigma'(0)>0$ if $|\tau|$ remains small.  Further, if hyperbolicity is valid, the condition $\sigma''(\tau)\not=0$ for all $\tau$ under consideration means that the two characteristic fields are \textit{genuinely nonlinear}. Still, by continuity, the genuine nonlinearity retains if $|\tau|$ remains small. We refer the reader to Bressan \cite{Br2} and Lax \cite{Lax} for concepts in hyperbolic conservation laws. After we get the dynamics on the center manifold(s), we will also see the interesting fact that the non-degeneracy of the reduced $O(2)$ Hopf bifurcation dynamics corresponds to the genuine nonlinearity of the system \eqref{first}. It is partially due to the connections of the dynamics with these fundamental concepts in hyperbolic conservation laws that makes our seemingly trivial generalization from the work Yao \cite{Yao} interesting. One reason is that we can not make such a conclusion by studying a specific flux function. In our work here, $\sigma'''(0)$ does also play an important role in our study: it enters the angular (but not the radial) equations of the $O(2)$ Hopf bifurcation dynamics. However, we do not know currently its connection with hyperbolic conservation laws.

Denote $\mu_1=a-a_c$ and $\mu_2=\delta-\delta_c$. To do the bifurcation analysis, we seek to isolate these bifurcation parameters. Hence we write system \eqref{perturb} in the following form
\begin{equation}\label{bs}
\partial_t U=\mathcal L(a_c,\delta_c)U+\mathcal L(a, \delta)U-\mathcal L(a_c,\delta_c)U+\mathcal N (U)\\
\end{equation}
With the identifications $R(U)=R_{11}(U)+R_{20}(U, U)+R_{30}(U, U, U)+\tilde R (U)$ and
$$R_{11}(U)=\begin{pmatrix}-\mu_1\partial_x^4 U^{(1)} \\ -\mu_2\partial_x^2 U^{(2)}\end{pmatrix},\, R_{20}(U, V)=\begin{pmatrix}0 \\ \frac{\sigma''(0)}{2}\partial_x(U^{(1)}V^{(1)})
\end{pmatrix},$$ 
$$R_{30}(U, V, W)=\begin{pmatrix}0 \\ \frac{\sigma'''(0)}{6}\partial_x(U^{(1)}V^{(1)}W^{(1)})
\end{pmatrix},\,\, \tilde{R}(U)=\begin{pmatrix}0 \\ \partial_x \Gamma(U^{(1)})
\end{pmatrix}, $$
we can write the nonlinear perturbation system \eqref{bs} as
\begin{equation}\label{bifurcation equation}
\partial_t U=\mathcal L(a_c,\delta_c)U + R(U, \mu_1, \mu_2).
\end{equation}
Notice that dependence of $R(U)=R(U,\mu_1,\mu_2)$ on $\mu_1$ and $\mu_2$ is  only through the linear term $R_{11}(U)=R_{11}(U, \mu_1$, $\mu_2)$ in the summands of the nonlinear term $R(U)$.
The role of this point lies in reducing a little bit the complications of our computations when we compute the reduced dynamics on the center manifold. However, we still need to be aware of the complications in later computations due to the fact that there are two parameters involved now. Our bifurcation analysis will be done around an arbitrary but fixed admissible critical configuration point $(a_c,\delta_c)$.

\section{Symmetry}\label{symmetry}

\setcounter{equation}{0}
\setcounter{theorem}{0}

We begin this section with the computation of the center space $Z_c$ (or equivalently $X_c$ as they are both finite dimensional) of $\mathcal L_c$. We have the following proposition:

\begin{proposition}\label{parametrization}
The center space $Z_c$ of $\mathcal L_c$ is spanned by $\xi_0=\xi_0(k_0)$, $\xi_1=\xi_1(k_0)$ through complex conjugate pairs by
$z_1\xi_0+z_1^*\xi_0^*+z_2\xi_1+z_2^*\xi_1^*$ where $z_1$, $z_2$ are complex numbers and 
$\xi_0(k_0)=\exp(ik_0x)\begin{pmatrix} 1\\ \frac{ \omega_c}{k_0}-ia_c k_0^3\end{pmatrix}$, $\xi_1 (k_0)=\exp(-ik_0x)\begin{pmatrix} 1\\-\frac{ \omega_c}{k_0}+ia_c k_0^3\end{pmatrix}$.
\end{proposition}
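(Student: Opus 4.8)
The plan is to exploit the Fourier block-diagonalization already recorded in Section~\ref{spectra}. Since $\mathcal L_c(e^{ikx}v)=e^{ikx}M_kv$ for every $k\in\mathbb Z\setminus\{0\}$ and $v\in\mathbb C^2$, the operator $\mathcal L_c$ respects the splitting of $X$ into Fourier modes, so its center space $Z_c$ is the finite direct sum of the eigenspaces attached to the center spectrum $\sigma_c(\mathcal L_c)=\{i\omega_c,-i\omega_c\}$ furnished by Proposition~\ref{cp}. By the discussion preceding that proposition, the coefficient $(a_c+1)k^4-\delta_c k^2$ of the linear term in \eqref{character} is nonzero whenever $|k|\neq|k_0|$, so no mode other than $k=\pm k_0$ places eigenvalues on the imaginary axis. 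Hence I would reduce the entire computation to diagonalizing the two matrices $M_{k_0}$ and $M_{-k_0}$.

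First I would simplify $M_{k_0}$ using the admissibility relation $\delta_c=(a_c+1)k_0^2$ from \eqref{condition}: this gives $\delta_c k_0^2-k_0^4=a_ck_0^4$, so that $M_{k_0}=\begin{pmatrix}-a_ck_0^4 & ik_0\\ i\sigma'(0)k_0 & a_ck_0^4\end{pmatrix}$, and the characteristic polynomial collapses to $\lambda^2+(\sigma'(0)k_0^2-a_c^2k_0^8)=\lambda^2+\omega_c^2$. Thus $M_{k_0}$ has the two simple eigenvalues $\pm i\omega_c$. Solving $(M_{k_0}-i\omega_c)v=0$ with first entry normalized to $1$ yields the second entry $\frac{a_ck_0^4+i\omega_c}{ik_0}=\frac{\omega_c}{k_0}-ia_ck_0^3$, which is precisely the vector packaged in $\xi_0$; the eigenvalue $-i\omega_c$ produces the conjugate vector, i.e.\ $\xi_1^*$. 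Running the same computation for $M_{-k_0}$, whose entries are the complex conjugates of those of $M_{k_0}$, recovers the eigenvector of $i\omega_c$ stored in $\xi_1$ and that of $-i\omega_c$ stored in $\xi_0^*$.

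Then I would assemble the pieces. Because $M_{k_0}$ and $M_{-k_0}$ each carry two distinct eigenvalues, each is diagonalizable, so both $i\omega_c$ and $-i\omega_c$ are semisimple and the four functions $\xi_0,\xi_1^*$ (supported in mode $k_0$) and $\xi_1,\xi_0^*$ (supported in mode $-k_0$) are linearly independent; they therefore span the four-dimensional center space $Z_c$. Since $\mathcal L_c$ has real coefficients we have $\xi_0^*=\overline{\xi_0}$ and $\xi_1^*=\overline{\xi_1}$, so a real-valued element of $Z_c$ is exactly one of the self-conjugate combinations $z_1\xi_0+z_1^*\xi_0^*+z_2\xi_1+z_2^*\xi_1^*$ with $z_1,z_2\in\mathbb C$, the two complex parameters matching the four real dimensions. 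This is the asserted parametrization.

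I expect the only genuine obstacle to be bookkeeping rather than depth: for each of $M_{\pm k_0}$ one must keep straight which normalized eigenvector belongs to $+i\omega_c$ and which to $-i\omega_c$, and verify that complex conjugation interchanges the mode $k_0$ and mode $-k_0$ eigenvectors in exactly the way that makes $\{\xi_0^*,\xi_1^*\}$ the conjugates of $\{\xi_0,\xi_1\}$. The substantive algebraic content is the one-line collapse of the characteristic polynomial to $\lambda^2+\omega_c^2$ via the critical condition, after which reading off the eigenvectors is routine.
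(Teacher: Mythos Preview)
Your proposal is correct and follows essentially the same approach as the paper: reduce to the Fourier blocks $M_{\pm k_0}$, use the admissibility relation $\delta_c=(a_c+1)k_0^2$ to simplify the matrix and its characteristic polynomial to $\lambda^2+\omega_c^2$, and then read off the normalized eigenvectors for $\pm i\omega_c$ at each of the two modes. The paper writes out the four eigenvector computations one at a time while you package them via the conjugacy $M_{-k_0}=\overline{M_{k_0}}$, but the content is identical.
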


The above proposition can be concluded by entirely similar computations as in \cite{Yao}. However, we will do such computations on one hand to
make the later computations for getting the reduced dynamics in a solid foundation and on the other hand to track how $k_0$ enters the final reduced dynamics. The latter
is important and interesting for us.

\begin{proof}
For the wave number $k_0$ and eigenvalue $\lambda=i\omega_c$, the eigenfunctions $(\tau, u)$ satisfy the following system
\begin{equation}\label{V}
\begin{pmatrix}-a_c\partial_x^4 & \partial_x\\ \sigma'(0)\partial_x &
-\delta_c\partial_x^2-\partial_x^4\end{pmatrix}\begin{pmatrix}\tau\\ u
\end{pmatrix}=i\omega_c\begin{pmatrix}\tau\\ u
\end{pmatrix}.
\end{equation}
We seek solutions of the form $\exp(ik_0x)V=\exp(ik_0x)\begin{pmatrix} v_1\\ v_2 \end{pmatrix}$ and get the algebraic equation for the vector $V$ from \eqref{V} as follows
\begin{equation*}
\begin{pmatrix}-a_c k_0^4 &ik_0\\ \sigma'(0)ik_0 &
a_c k_0^4\end{pmatrix}\begin{pmatrix}v_1\\ v_2
\end{pmatrix}=i\omega_c\begin{pmatrix} v_1\\ v_2
\end{pmatrix},
\end{equation*}
i.e.,
\begin{equation}\label{V1}
\begin{pmatrix}-a_c k_0^4-i\omega_c &ik_0\\ \sigma'(0)ik_0 &
a_c k_0^4-i\omega_c\end{pmatrix}\begin{pmatrix}v_1\\ v_2
\end{pmatrix}=\begin{pmatrix} 0\\ 0
\end{pmatrix}.
\end{equation}
The coefficient matrix in \eqref{V1} is of rank one due to the definition of $\omega_c$ and the trivial fact that $ik_0\neq0$. Therefore, we get the form of  $V$ as 
$V=\begin{pmatrix} v_1\\( \frac{ \omega_c}{k_0}-ia_c k_0^3)v_1\end{pmatrix},$
which enables us to pick $\xi_0(k_0)$ as $\xi_0(k_0)=\exp(ik_0x)\begin{pmatrix} 1\\ \frac{ \omega_c}{k_0}-ia_c k_0^3\end{pmatrix}$

For the wave number $k_0$ and $\lambda=-i\omega_c$, the eigenfunctions $(\tau, u)$ satisfy the system
 \begin{equation}
\begin{pmatrix}-a_c\partial_x^4 & \partial_x\\ \sigma'(0)\partial_x &
-\delta_c\partial_x^2-\partial_x^4\end{pmatrix}\begin{pmatrix}\tau\\ u
\end{pmatrix}=-i\omega_c\begin{pmatrix}\tau\\ u
\end{pmatrix}.
\end{equation}
Seeking solutions of the form $\exp(ik_0x)V=\exp(ik_0x)\begin{pmatrix} v_1\\ v_2 \end{pmatrix}$, 
we get the algebraic system for the vector $V$ 
\begin{equation*}
\begin{pmatrix}-a_c k_0^4 &ik_0\\ \sigma'(0)ik_0 &
a_c k_0^4\end{pmatrix}\begin{pmatrix}v_1\\ v_2
\end{pmatrix}=-i\omega_c \begin{pmatrix} v_1\\ v_2
\end{pmatrix},
\end{equation*}
i.e.,
\begin{equation}\label{V2}
\begin{pmatrix}-a_c k_0^4+i\omega_c &ik_0\\ \sigma'(0)ik_0 &
a_c k_0^4+i\omega_c \end{pmatrix}\begin{pmatrix}v_1\\ v_2
\end{pmatrix}=\begin{pmatrix} 0\\ 0
\end{pmatrix}.
\end{equation}
Similarly, the coefficient matrix in \eqref{V2} is of rank one and we get the form of $V$ here as $V=\begin{pmatrix} v_1\\-( \frac{ \omega_c}{k_0}+ia_c k_0^3)v_1\end{pmatrix}$,
which enables us to pick an eigenfunction of $\mathcal L_c$ associated with $\lambda=-i\omega_c$ as $\xi_1(k_0)^*=\exp(ik_0x)\begin{pmatrix} 1\\-\frac{ \omega_c}{k_0}-ia_c k_0^3\end{pmatrix}$.

Next, we can write down the eigenfunctions of $\mathcal L_c$ for the wave number $-k_0$ with $\lambda=\pm i\omega_c$ by conjugacy or by repeating the above computations. Specifically, the eigenfunction for the wave number $-k_0$ associated with $\lambda=-i\omega_c$ is given by $\xi_0(k_0)^*=\exp(-ik_0x)\begin{pmatrix} 1\\ \frac{ \omega_c}{k_0}+ia_c k_0^3\end{pmatrix}$,
and the eigenfunction for the wave number $-k_0$ associated with $\lambda=i\omega_c$ is given by
$\xi_1 (k_0)=\exp(-ik_0x)\begin{pmatrix} 1\\-\frac{ \omega_c}{k_0}+ia_c k_0^3\end{pmatrix}$.
\end{proof}

We will adopt the specific parametrization of $Z_c$ in Proposition \ref{parametrization} in the remaining of the paper. Now we consider the group actions represented by $T_h, S$ which act on $U(x)$ through the spatial variable $x$ as follows:
$$T_h\begin{pmatrix} \tau(x)\\ u(x) \end{pmatrix}=\begin{pmatrix} \tau(x+h)\\ u(x+h) \end{pmatrix},\, h\in \mathbb{R}^1/[-\pi, \pi];S\begin{pmatrix} \tau(x)\\ u(x) \end{pmatrix}=\begin{pmatrix} \tau(-x)\\ -u(-x) \end{pmatrix}.$$

Our nonlinear perturbation system \eqref{bs} with parameters $\mu_1$ and $\mu_2$ has the form
\begin{equation}\label{equivariance}
\partial_t U(x,t)=\mathcal L_c U(x,t) +\begin{pmatrix} -\mu_1 \partial_x^4\tau(x,t)\\ -\mu_2\partial_x^2 u(x,t) \end{pmatrix}+\begin{pmatrix} 0\\ \partial_x f(\tau(x,t)) \end{pmatrix}
\end{equation}
where $f=f(\tau)$ is an arbitrary scalar function of $\tau$ only.
For system \eqref{equivariance}, we have the following proposition:

\begin{proposition}\label{proposition}
For any function $f=f(\tau)$, the system \eqref{equivariance} on $\mathbb{R}^1/[-\pi, \pi]$ 
exhibits $O(2)$-symmetry, which is represented by $[T_h, \mathcal L_c]=0,\,\,[T_h, \mathcal R]=0, \,\, [S, \mathcal R]=0,\,\, [S, \mathcal L_c]=0,\,\, T_hS=ST_{-h}$, 
where $\mathcal R(U):=\begin{pmatrix} -\mu_1\partial_x^4\\ -\mu_2\partial_x^2 u(x,t) \end{pmatrix}+\begin{pmatrix} 0\\ \partial_x f(\tau(x,t)) \end{pmatrix}$.
\end{proposition}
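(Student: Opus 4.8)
The plan is to verify the five stated relations one at a time, grouping them by which generator is involved. The two translation relations $[T_h,\mathcal L_c]=0$ and $[T_h,\mathcal R]=0$ I would dispatch first, since they both reduce to the single observation that every building block of $\mathcal L_c$ and $\mathcal R$ is translation invariant. Concretely, $\partial_x$ commutes with the shift $x\mapsto x+h$ because $\partial_x\big(U(\cdot+h)\big)(x)=(\partial_x U)(x+h)$, and this extends to every constant-coefficient power $\partial_x^n$; the only nonlinear ingredient, $\partial_x f(\tau)$, also commutes with $T_h$ because $\partial_x f\big(\tau(\cdot+h)\big)=\big(\partial_x f(\tau)\big)(\cdot+h)$ by the chain rule. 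Since $a_c,\delta_c,\sigma'(0),\mu_1,\mu_2$ are constants, applying these facts entrywise to the matrix $\mathcal L_c$ and to $\mathcal R$ yields both commutators.

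The heart of the proof is the pair of reflection relations $[S,\mathcal L_c]=0$ and $[S,\mathcal R]=0$. Here I would write $S=P\circ\rho$, where $\rho$ is the pullback by $x\mapsto -x$ and $P=\mathrm{diag}(1,-1)$ flips the sign of the $u$-component, and I would record the single parity identity $\partial_x^n\big(\phi(-x)\big)=(-1)^n(\partial_x^n\phi)(-x)$. The strategy is then pure bookkeeping: compute $\mathcal L_c(SU)$ and $S(\mathcal L_c U)$ entry by entry and match them. The mechanism that makes it work is that the diagonal entries of $\mathcal L_c$, namely $-a_c\partial_x^4$ and $-\delta_c\partial_x^2-\partial_x^4$, are even-order and so are invariant under $\rho$, the $P$ sign flip passing through an even derivative unchanged and cancelling with the $P$ coming from $S$; whereas the off-diagonal entries $\partial_x$ and $\sigma'(0)\partial_x$ are odd-order and each pick up a factor $-1$ under $\rho$, which is exactly compensated by the single sign difference that $P$ introduces between the two components. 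The same parity count handles $\mathcal R$: the even-order terms $-\mu_1\partial_x^4$ and $-\mu_2\partial_x^2$ behave like the diagonal of $\mathcal L_c$, and the nonlinearity $\partial_x f(\tau)$, a single odd-order derivative of a scalar function of the first component only, matches the off-diagonal pattern once the chain rule is applied to $f\big(\tau(-x)\big)$. Generality in $f$ costs nothing here, which is precisely the point of stating the proposition for arbitrary $f$.

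Finally the twisted relation $T_hS=ST_{-h}$ I would prove by direct substitution, tracking the composition of the reflection and the shift: applying $T_hS$ to $U$ sends the spatial argument to $-(x+h)$, while $ST_{-h}$ sends it to $-x-h$, and the sign flip on the $u$-component is applied exactly once in either order, so the two coincide. Combined with $T_hT_{h'}=T_{h+h'}$ and $S^2=\mathrm{Id}$, this is exactly the presentation of $O(2)$ as the semidirect product of the translation circle with the order-two reflection, which justifies the name $O(2)$-symmetry.

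The only place demanding genuine care is the second paragraph: the sign bookkeeping in the reflection commutators is error-prone, since the parity of each derivative must be combined correctly with the $-1$ that $P$ inserts in the second slot, and the nonlinear term requires one chain-rule step. I expect this to be the main, though entirely elementary, obstacle; the translation relations and the twisted relation are immediate.
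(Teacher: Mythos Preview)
Your proposal is correct and follows essentially the same approach as the paper: direct, entrywise verification of each of the five relations. Your organization is marginally more conceptual---decomposing $S=P\circ\rho$ and invoking the parity identity $\partial_x^n(\phi(-x))=(-1)^n(\partial_x^n\phi)(-x)$ to explain in one stroke why even-order diagonal entries survive and odd-order off-diagonal entries have their sign flip cancelled by $P$---whereas the paper writes out each component explicitly, but the underlying computation is identical.
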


\begin{proof}
The proof is simple symbolic computations. The tricky point is that we also need to put negative signs in the derivatives when the $S$ action is involved. Due to the importance of the symmetry property, we verify the conclusions here.  

(1) We first verify that $[T_h, \mathcal L_c]=0$. 
\begin{align*}
T_h\mathcal L_c U(x)&=T_h\begin{pmatrix} -a_c\partial_x^4\tau(x)+\partial_xu(x)\\ \sigma'(0)\partial_x\tau(x)-\delta_c\partial_x^2u(x)-\partial_x^4 u(x) \end{pmatrix}\\
&=\begin{pmatrix} -a_c\partial_{x+h}^4\tau(x+h)+\partial_{x+h}u(x+h)\\ \sigma'(0)\partial_{x+h}\tau(x+h)-\delta_c\partial_{x+h}^2u(x+h)-\partial_{x+h}^4 u(x+h) \end{pmatrix}\\
&=\begin{pmatrix} -a_c\partial_{x}^4\tau(x+h)+\partial_{x}u(x+h)\\ \sigma'(0)\partial_{x}\tau(x+h)-\delta_c\partial_{x}^2u(x+h)-\partial_{x}^4 u(x+h) \end{pmatrix}.
\end{align*}
On the other hand, we have
\begin{align*}
\mathcal L_c T_hU(x)&=\begin{pmatrix} -a_c\partial_x^4 & \partial_x\\ \sigma'(0)\partial_x & -\delta_c\partial_x^2u(x)-\partial_x^4 u(x) \end{pmatrix}\begin{pmatrix}\tau(x+h)\\ u(x+h)\end{pmatrix}\\
&=\begin{pmatrix} -a_c\partial_{x}^4\tau(x+h)+\partial_{x}u(x+h)\\ \sigma'(0)\partial_{x}\tau(x+h)-\delta_c\partial_{x}^2u(x+h)-\partial_{x}^4 u(x+h) \end{pmatrix}.
\end{align*}
Hence we obtain $[T_h, \mathcal L_c]=0$.

(2) $[T_h, \mathcal R]=0$ can be verified similarly as in (1).

(3) Now we verify that $[S, \mathcal R]=0$. 
We have 
\begin{align*}
S\mathcal R(U)&=S\begin{pmatrix} -\mu_1\partial_x^4\tau(x,t)\\ -\mu_2\partial_x^2 u(x,t) \end{pmatrix}+S\begin{pmatrix} 0\\ \partial_x f(\tau(x,t)) \end{pmatrix}\\
&=\begin{pmatrix} -\mu_1\partial_{-x}^4\tau(-x, t)\\ -\Big(-\mu_2\partial_{-x}^2 u(-x,t)\Big) \end{pmatrix}+\begin{pmatrix} 0\\ -\partial_{-x} f(\tau(-x,t)) \end{pmatrix}\\
&=\begin{pmatrix}  -\mu_1\partial_{x}^4\tau(-x, t)\\ \mu_2\partial_{x}^2 u(-x,t)\Big) \end{pmatrix}+\begin{pmatrix} 0\\ \partial_{x} f(\tau(-x,t)) \end{pmatrix}.
\end{align*}
Noticing that
\begin{align*}
\mathcal R(SU)&=\begin{pmatrix} -\mu_1\partial_x^4\tau(-x,t)\\ -\mu_2\partial_x^2 (-u(-x,t)) \end{pmatrix}+\begin{pmatrix} 0\\ \partial_x f(\tau(-x,t)) \end{pmatrix}\\
&=\begin{pmatrix} -\mu_1\partial_x^4\tau(-x,t) \\ \mu_2\partial_{x}^2 u(-x,t)\Big) \end{pmatrix}+\begin{pmatrix} 0\\ \partial_{x} f(\tau(-x,t)) \end{pmatrix},
\end{align*}
we see $[S, \mathcal R]=0$ by comparing the two expressions.

(4) $[S, \mathcal L_c]=0$ can be verified similarly as in (3).

(5) Finally, we verify that $T_hS=ST_{-h}$. 
It is easy to see that
$$T_hSU=T_h\begin{pmatrix}\tau(-x)\\-u(-x) \end{pmatrix}=\begin{pmatrix}\tau(-(x+h))\\-u(-(x+h)) \end{pmatrix},$$
$$ST_{-h}U=S\begin{pmatrix}\tau(x-h)\\u(x-h) \end{pmatrix}=\begin{pmatrix}\tau(-x-h)\\-u(-x-h) \end{pmatrix}.$$
Hence we have $T_hS=ST_{-h}$.
\end{proof}

Coming back to our nonlinear perturbation system \eqref{perturb} or bifurcation system  
\eqref{bifurcation equation}, we have the following corollary:

\begin{corollary}\label{coro}
Both systems \eqref{perturb} and \eqref{bifurcation equation}  exhibit $O(2)$ symmetry. Also, the system given by
$\partial_t U=\mathcal L_c U+R_{11}(U, \mu_1,\mu_2)+R_{20}(U, U)+R_{30}(U, U, U)$
on $\mathbb{R}^1/[-\pi, \pi]$ also exhibits $O(2)$ symmetry.
\end{corollary}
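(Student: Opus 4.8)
The plan is to recognize all three systems in the statement as instances of the template \eqref{equivariance}, obtained by choosing the scalar flux $f(\tau)$ appropriately, and then to invoke Proposition \ref{proposition}, which already establishes $O(2)$-equivariance for \emph{every} such $f$. No new commutator computation is needed; the work is purely a matter of matching each system to the template and then assembling the five relations into equivariance of the full vector field.

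First I would rewrite \eqref{perturb} (equivalently \eqref{bifurcation equation}) in the form \eqref{equivariance}. Since $a=a_c+\mu_1$ and $\delta=\delta_c+\mu_2$, the linear operator splits as
\[
\mathcal L(a,\delta)U=\mathcal L_c U+\begin{pmatrix}-\mu_1\partial_x^4\tau\\ -\mu_2\partial_x^2 u\end{pmatrix},
\]
which reproduces exactly the $\mu_1,\mu_2$ part of $\mathcal R$. The remaining nonlinearity is $\partial_x\big(\frac{\sigma''(0)}{2}\tau^2+\frac{\sigma'''(0)}{6}\tau^3+\Gamma(\tau)\big)$, sitting in the second component with vanishing first component; setting $f(\tau)=\frac{\sigma''(0)}{2}\tau^2+\frac{\sigma'''(0)}{6}\tau^3+\Gamma(\tau)$, a scalar function of $\tau$ alone, turns \eqref{perturb} into \eqref{equivariance} verbatim. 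For the truncated system $\partial_t U=\mathcal L_c U+R_{11}+R_{20}(U,U)+R_{30}(U,U,U)$ I would observe that $R_{20}(U,U)$ and $R_{30}(U,U,U)$ have the same shape $\begin{pmatrix}0\\ \partial_x(\cdot)\end{pmatrix}$, with second entries $\frac{\sigma''(0)}{2}\partial_x(\tau^2)$ and $\frac{\sigma'''(0)}{6}\partial_x(\tau^3)$, so the identification again works with the truncated flux $f(\tau)=\frac{\sigma''(0)}{2}\tau^2+\frac{\sigma'''(0)}{6}\tau^3$.

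With these identifications each system has the form $\partial_t U=\mathcal L_c U+\mathcal R(U)$ for an admissible $f$, so Proposition \ref{proposition} supplies the relations $[T_h,\mathcal L_c]=[T_h,\mathcal R]=[S,\mathcal R]=[S,\mathcal L_c]=0$ and $T_hS=ST_{-h}$. I would then assemble these into equivariance of the full right-hand side $F:=\mathcal L_c+\mathcal R$: additivity of the commutator gives $T_hF(U)=F(T_hU)$ and $SF(U)=F(SU)$, so $F$ commutes with every group element, while $T_hS=ST_{-h}$ together with $S^2=\mathrm{id}$ identifies the group generated by $\{T_h\}_{h\in\mathbb R/[-\pi,\pi]}$ and $S$ as $O(2)$. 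Hence each system is $O(2)$-equivariant in the sense of the definition in the introduction. The content is entirely bookkeeping rather than analysis, so I do not anticipate a genuine obstacle; the only point demanding care is the splitting of $\mathcal L(a,\delta)$ into $\mathcal L_c$ plus the $\mu_1,\mu_2$ correction, so that the parameter-dependent terms are correctly absorbed into $\mathcal R$ and the leftover nonlinear flux is confirmed to depend on $\tau$ only. Once this is checked, the $O(2)$-symmetry follows at once, with no need to reverify the commutator relations for the specific fluxes.
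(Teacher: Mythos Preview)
Your proposal is correct and follows exactly the route the paper intends: the corollary is stated without proof precisely because each system is an instance of \eqref{equivariance} for a suitable scalar $f(\tau)$, so Proposition \ref{proposition} applies directly. Your identifications of $f$ for the full and truncated systems, and the splitting $\mathcal L(a,\delta)=\mathcal L_c+R_{11}$, are the only bookkeeping needed and are carried out correctly.
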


We emphasize that it is Proposition \ref{proposition} or Corollary \ref{coro} that enables us to extend our former work in \cite{Yao} to
the case of two bifurcation parameters and general flux functions. More precisely, the symmetry property of the system is preserved even when we
introduce one more parameter which enters through a term $-\mu_1\partial_x^4\tau(x,t)$ and add or drop higher order terms in the expansion of $\sigma(\tau)$.
The symmetry helps us  both in identifying the normal form of the dynamics as in \cite{HI} and in concluding the bifurcation dynamics. As we need to analyze a reduced dynamics for the latter purpose, it is crucial for us to avoid such a situation that the system admits the desired symmetry after dropping higher order terms but does not before and \textit{vice versa}. 

To do effective computations later, we need the representation of the operator $\mathcal L_c$ on the center space and a duality argument. 

For the former, we collect some information of the actions of $\mathcal L_c$, $T_h$ and $S$ on the center space $Z_c$. Under our chosen parametrization of the center space $Z_c$, $\mathcal L_c$ is represented on $Z_c$ by the diagonal matrix $diag\{i\omega_c,-i\omega_c,i\omega_c,-i\omega_c\}$. The actions of $T_h$ and $S$ on the complex ``basis" $\{\xi_0(k_0), \xi_0(k_0)^*,  \xi_1(k_0), \xi_1(k_0)^*\}$ are specified  by $S\xi_0(k_0)=\xi_1(k_0)$, $S\xi_1(k_0)=\xi_0(k_0)$, $S\xi_0(k_0)^*=\xi_1(k_0)^*$, $S\xi_1(k_0)^*=\xi_0(k_0)^*$, $T_h\xi_0(k_0)=\exp(ih)\xi_0(k_0)$, $T_h\xi_1(k_0)=\exp(-ih)\xi_1(k_0)$, $T_{-h}\xi_0(k_0)^*=\exp(-ih)\xi_0(k_0)^*$, $T_{-h}\xi_1(k_0)^*=\exp(ih)\xi_1(k_0)^*$. The above expressions can be easily verified in view of the definitions of $\xi_0(k_0)$, $\xi_1(k_0)$,
$T_h$ and $S$.

For the latter, we define $\eta=\eta(k_0)$ for the later use of duality as follows:
\begin{equation}\label{eta}
\eta(k_0)=\frac{k_0}{4\pi \omega_c}\exp(ik_0x)\begin{pmatrix} -ia_c k_0^3+\frac{\omega_c}{k_0}\\1\end{pmatrix}
\end{equation}
It is easy to verify that 
$$\langle \xi_0(k_0), \eta(k_0)\rangle=\int\xi_0(k_0)\cdot \eta(k_0)^*\,dx=1.$$
Actually, taking into account the two relations $\delta_c=(a_c+1)k_0^2$ and $\omega_c^2=\sigma'(0)k_0^2-a_c^2k_0^8$, one can easily check that
\begin{equation}
(i\omega_c-\mathcal L_c)^*\eta=\begin{pmatrix}-i\omega_c+a_c\partial_x^4 & \sigma'(0)\partial_x \\ \partial_x & -i\omega_c+\delta_c\partial_x^2+\partial_x^4 \end{pmatrix}\eta=0,
\end{equation}
where $(i\omega_c-\mathcal L_c)^*$ is the adjoint of the linear operator $i\omega_c-\mathcal L_c$.

Now we consider the symmetry and tangency in the center manifold theory and normal form theory.
The guiding principle is that the center manifold reduction function and the normal form transformation function inherit the symmetries from the
partial differential equations. In other words, the center manifold reduction function and the normal form transformation function are invariant under the same group actions if the partial differential equations are invariant under some group actions. To fix the ideas and make the exposition clear, let us first cite a version of the parameter dependent center manifold theorem with group actions and a version of normal form theorem with group actions.

\begin{theorem} (Parameter dependent center manifold theorem with symmetries, see \cite{HI, GSS, Yao})\label{center manifold theorem}
Let the inclusions in the Banach space triplet $\mathcal Z \subset \mathcal Y\subset \mathcal X$ be continuous. Consider a differential equation in a Banach space $\mathcal X$ of the form
$$\frac{du}{dt}=\mathcal{L}u+\mathcal{R}(u,\mu)$$ and assume that

(1) (Assumption on linear operator and nonlinearity) $\mathcal{L}:\mathcal Z\mapsto\mathcal X$ is a bounded linear map
and for some $k\geq 2$, there exist neighborhoods
$\mathscr{V_u}\subset\mathcal Z$ and
$\mathscr{V_\mu}\subset\mathbb{R}^m$ of $(0,0)$ such that
$\mathcal{R}\in C^k(\mathscr{V}_u\times \mathscr{V_{\mu}},
\mathcal Y)$ and
$$\mathcal{R}(0,0)=0,\,\, D_u\mathcal{R}(0,0)=0.$$

(2) (Spectral decomposition) there exists some constant $\gamma>0$ such that
$$\inf\{Re\lambda; \lambda\in \sigma_u(\mathcal L)\}>\gamma,\,\, \sup\{Re\lambda; \lambda\in \sigma_s(\mathcal L)\}<-\gamma,$$
and the set $\sigma_c(\mathcal L)$ consists of a finite number of eigenvalues with finite algebraic multiplicities.

(3) (Resolvent estimates) For Hilbert space triplet $\mathcal Z \subset \mathcal Y\subset \mathcal X$, assume
there exists a positive constant $\omega_0>0$ such that $i\omega\in \rho(\mathcal L)$ for all $|\omega|>\omega_0$ and
$\|(i\omega-\mathcal L)^{-1}\|_{\mathcal X\mapsto \mathcal X}\lesssim\frac{1}{|\omega|}$. For Banach space triplet, we need further
$\|(i\omega-\mathcal L)^{-1}\|_{\mathcal Y\mapsto \mathcal X}\lesssim\frac{1}{|\omega|^{\alpha}}$ for some $\alpha\in [0,1)$.

Then there exists a map $\Psi\in\mathcal C^k(Z_c, Z_h)$ and a neighborhood 
$\mathscr O_{u}\times \mathscr O_{\mu}$of $(0, 0)$ in $\mathcal Z\times \mathbb R^m$ such that

(a) (Tangency) $\Psi(0, 0)=0$ and $D_u\Psi(0,0)=0$.

(b) (Local flow invariance) the manifold $\mathcal M_0 (\mu)=\{ u_0+\Psi(u_0,\mu); u_0\in Z_c \}$ has the properties.
(i) $\mathcal M_0 (\mu)$ is locally invariant, i.e., if $u$ is a solution satisfying $u(0)\in \mathcal M_0 (\mu)\cap \mathscr O_{\mu}$ and $u(t)\in\mathcal O_{u}$ for all $t\in [0, T]$, then
$u(t)\in\mathcal M_0 (\mu)$ for all $t\in [0, T]$; (ii) $\mathcal M_0 (\mu)$ contains the set of bounded solutions staying in $\mathcal O_{u}$ for all $t\in\mathbb{R}^1$, i.e., if $u$ is a solution satisfying
$u(t)\in\mathcal O_u$ for all $t\in\mathbb R^1$, then $u(0)\in \mathcal M_0 (\mu)$.

(c) (Symmetry) Moreover, if the vector field is equivariant in the sense that there
exists an isometry $\mathscr{T}\in \mathcal L(\mathcal X)\cap \mathcal L(\mathcal Z)$
which commutes with the vector field in the original system,
$$[\mathscr{T}, \mathcal{L}]=0,\,\,[\mathscr{T}, \mathcal{R}]=0,$$
then the $\Psi$ commutes
with $\mathscr{T}$ on $Z_c$: $[\Psi, T]=0$.
\end{theorem}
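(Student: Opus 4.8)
The plan is to establish Theorem~\ref{center manifold theorem} by the Lyapunov--Perron method, constructing $\Psi$ as a graph over $Z_c$ via a fixed point argument and then reading off the symmetry from the equivariance of that fixed point equation. First I would use the spectral decomposition in assumption~(2) to define the spectral projections $P_c$, $P_s$, $P_u$ onto $Z_c$, $Z_s$, $Z_u$ as Dunford integrals of the resolvent $(\lambda-\mathcal{L})^{-1}$ around the corresponding parts of $\sigma(\mathcal{L})$; the spectral gap $\gamma$ guarantees these are well separated and the projections are bounded. On the hyperbolic part these projections yield an exponential dichotomy: the semigroup generated by $\mathcal{L}$ decays like $e^{-\gamma t}$ on $Z_s$ forward in time and like $e^{\gamma t}$ on $Z_u$ backward in time. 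The resolvent estimate in assumption~(3), namely $\|(i\omega-\mathcal{L})^{-1}\|_{\mathcal{Y}\to\mathcal{X}}\lesssim|\omega|^{-\alpha}$, is what upgrades these semigroup bounds to the smoothing estimates needed to absorb the loss of regularity coming from the fact that $\mathcal{R}$ maps into $\mathcal{Y}$ rather than into $\mathcal{Z}$.

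Next I would truncate the nonlinearity, replacing $\mathcal{R}(u,\mu)$ by $\chi(\|u\|/\rho)\,\mathcal{R}(u,\mu)$ with $\chi$ a smooth radial cut-off and $\rho$ small, so that the modified nonlinearity is globally defined, still $C^k$, and has arbitrarily small Lipschitz constant (this uses $\mathcal{R}(0,0)=0$ and $D_u\mathcal{R}(0,0)=0$ from assumption~(1)). I would then seek the bounded trajectories of the modified equation in an exponentially weighted space $BC_\eta(\mathbb{R},\mathcal{Z})$ with $0<\eta<\gamma$, writing a solution through the variation-of-constants Lyapunov--Perron integral formula: the $Z_c$-component is propagated forward from prescribed data $u_0\in Z_c$, the $Z_s$-component is integrated forward from $-\infty$, and the $Z_u$-component backward from $+\infty$. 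The combination of the dichotomy estimates and the smoothing resolvent bound makes this integral operator a contraction on $BC_\eta$ for $\rho$ small, so Banach's fixed point theorem yields a unique bounded solution $u(\cdot;u_0,\mu)$ depending on the parameter, and I define $\Psi(u_0,\mu):=P_h\,u(0;u_0,\mu)\in Z_h$. Tangency~(a) is then immediate from the smallness of the truncated nonlinearity together with $\mathcal{R}(0,0)=0$ and $D_u\mathcal{R}(0,0)=0$, while the local invariance and maximality in~(b) follow from the standard characterisation of $\mathcal{M}_0(\mu)$ as the set of bounded trajectories of the truncated flow.

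The $C^k$ regularity of $\Psi$ is the technical heart and the step I expect to be the main obstacle. Because $\mathcal{R}\colon\mathcal{Z}\times\mathbb{R}^m\to\mathcal{Y}$ loses one degree of regularity, naive differentiation of the fixed point equation does not close in a single space; instead I would run a fiber contraction argument, or equivalently a contraction on a scale of weighted spaces $BC_{j\eta}$ for $j=0,1,\dots,k$, differentiating the integral equation formally and showing that each derivative solves a fixed point problem governed by the same contraction constant, so that the formal derivatives exist, are continuous, and depend continuously on $\mu$. Here the hypothesis $k\geq2$ and the $C^k$ smoothness of $\mathcal{R}$ enter, and one must track how the exponent $\alpha<1$ in the resolvent estimate keeps every weighted convolution integral convergent.

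Finally, for the equivariance~(c), I would observe that since $\mathscr{T}$ commutes with $\mathcal{L}$ it commutes with the resolvent and hence with the spectral projections $P_c$ and $P_h$; since $\mathscr{T}$ is an isometry, the radial cut-off $\chi(\|\cdot\|/\rho)$ is $\mathscr{T}$-invariant, so the truncated nonlinearity still satisfies $[\mathscr{T},\mathcal{R}]=0$. Consequently the Lyapunov--Perron integral operator is $\mathscr{T}$-equivariant, which forces $\mathscr{T}\,u(\cdot;u_0,\mu)$ to be a bounded solution with center data $\mathscr{T}u_0$. By uniqueness of the fixed point, $u(\cdot;\mathscr{T}u_0,\mu)=\mathscr{T}\,u(\cdot;u_0,\mu)$; applying $P_h$ at $t=0$ and using $[\mathscr{T},P_h]=0$ gives $\Psi(\mathscr{T}u_0,\mu)=\mathscr{T}\,\Psi(u_0,\mu)$, that is $[\Psi,\mathscr{T}]=0$ on $Z_c$, as claimed.
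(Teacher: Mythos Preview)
Your outline is a faithful sketch of the standard Lyapunov--Perron construction of the center manifold with symmetry, and it matches the approach in the cited references (in particular Haragus--Iooss). However, the paper itself does \emph{not} supply a proof of this theorem: Theorem~\ref{center manifold theorem} is stated as a known result imported from \cite{HI, GSS, Yao}, and the paper only \emph{applies} it, verifying its hypotheses for the specific system~\eqref{bifurcation equation} in the proof of Theorem~\ref{plemma}. So there is no ``paper's own proof'' to compare against; your proposal is essentially the proof one finds in the references, and in that sense it is correct and appropriate.

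One small remark on your sketch: in the equivariance step you rely on the cut-off $\chi(\|\cdot\|/\rho)$ being $\mathscr{T}$-invariant because $\mathscr{T}$ is an isometry. This is exactly right, but be aware that in the presence of a whole group of symmetries (here $O(2)$, not a single isometry) one needs the cut-off to be simultaneously invariant under all group elements; since each $T_h$ and $S$ is an isometry on the chosen Hilbert spaces, a norm-radial cut-off does the job, so no issue arises in this application.
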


\begin{theorem} (Parameter dependent normal form theorem with symmetries \cite{HI, GSS, Yao})\label{normal form theorem}
Consider a differential equation in $\mathbb{R}^n$ of the form
$$\frac{du}{dt}=\mathcal{L}u+\mathcal{R}(u,\mu)$$ and assume that

(1) $\mathcal{L}$ is a linear map in $\mathbb{R}^n$;\\

(2) for some $k\geq 2$, there exist neighborhoods
$\mathscr{V_u}\subset\mathbb{R}^n$ and
$\mathscr{V_\mu}\subset\mathbb{R}^m$ of $(0,0)$ such that
$\mathcal{R}\in C^k(\mathscr{V}_u\times \mathscr{V_{\mu}},
\mathbb{R}^n)$ and
$$\mathcal{R}(0,0)=0,\,\, D_u\mathcal{R}(0,0)=0.$$
Then for any positive integer p, $k>p\geq2$, there exists
neighborhoods $\mathscr{V}_1$ and $\mathscr{V}_2$ of $(0,0)$ in
$\mathbb{R}^n\times \mathbb{R}^m$ such that for any $\mu\in
\mathscr{V}_2$, there is a polynomial
$\Pi_{\mu}:\mathbb{R}^n\rightarrow\mathbb{R}^n$ of degree $p$ with
the following properties.

(i) The coefficients of the monomials of degree $q$ in $\Pi_{\mu}$
are functions of $\mu$ of class $C^{k-q}$, and
$$\Pi_0(0)=0,\,\, D_u \Pi_0(0)=0.$$

(ii) For $v\in\mathscr{V}_1$, the polynomial change of variables
$$u=v+\Pi_{\mu}(v)$$
transforms the original system into the normal form
\begin{equation*}
\frac{dv}{dt}=\mathcal{L}v+\mathscr{N}_{\mu}(v)+\rho(v,\mu),
\end{equation*} such that

(a) (Tangency) For any $\mu\in \mathscr{V}_2$, $\mathscr{N}_{\mu}$ is a
polynomial $\mathbb{R}^n\rightarrow\mathbb{R}^n$ of degree $p$, with
coefficients depending on $\mu$, such that the coefficients of the
monomials of degree $q$ are of class $C^{k-q}$, and
$$\mathscr{N}_0(0)=0,\,\, D_v\mathscr{N}_0(0)=0.$$

(b) (Characteristic condition) The equality
$$\mathcal{N}_{\mu}(e^{t\mathcal{L}^*}v)=e^{t\mathcal{L}^*}\mathcal{N}_{\mu}(v)$$
holds for all $(t,\mu)\in\mathbb{R}\times\mathbb{R}^n$ and
$\mu\in\mathscr{V}_2$.

(c) (Smoothness )The map $\rho$ belongs to
$C^k(\mathscr{V}_1\times\mathscr{V}_2,\mathbb{R}^n)$, and
$$\rho(v,\mu)=o(|v|^p)$$
for all $\mu\in \mathscr{V}_2$.

(d) (Symmetry) Moreover, if the vector field is equivariant in the sense that there
exists an isometry $\mathscr{T}:\mathbb{R}^n\rightarrow\mathbb{R}^n$
which commutes with the vector field in the original system,
$$[\mathscr{T}, \mathcal{L}]=0,\,\,[\mathscr{T}, \mathcal{R}]=0,$$
then the polynomials $\Pi_{\mu}$ and $\mathscr{N}_{\mu}$ commute
with $\mathscr{T}$ for all $\mu\in\mathscr{V}_2$.
\end{theorem}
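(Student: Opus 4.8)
The plan is to prove the theorem by the classical method of successive near-identity polynomial transformations, carried out degree by degree, with the symmetry and the parameter dependence tracked throughout. First I would Taylor-expand the nonlinearity as $\mathcal{R}(u,\mu)=\sum_{q=2}^{p}R_q(u,\mu)+o(|u|^p)$, where each $R_q(\cdot,\mu)$ is a homogeneous vector-valued polynomial of degree $q$ in $u$; by assumption (2) the expansion starts at $q=2$, and since $\mathcal{R}$ is $C^k$ jointly, the coefficient of each degree-$q$ monomial (a $q$-th order $u$-derivative of $\mathcal{R}$ at the origin) is a $C^{k-q}$ function of $\mu$.

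The central object is the homological operator, that is, the adjoint action of $\mathcal{L}$ on the finite-dimensional space $\mathcal{H}_q$ of homogeneous degree-$q$ polynomial maps $\mathbb{R}^n\to\mathbb{R}^n$, defined by $(\mathrm{ad}_{\mathcal{L}}\Phi)(v)=D\Phi(v)\,\mathcal{L}v-\mathcal{L}\,\Phi(v)$. The key algebraic fact I would establish is the splitting $\mathcal{H}_q=\mathrm{Range}(\mathrm{ad}_{\mathcal{L}})\oplus\ker(\mathrm{ad}_{\mathcal{L}^*})$: after fixing a Hermitian scalar product on $\mathcal{H}_q$ for which the adjoint of $\mathrm{ad}_{\mathcal{L}}$ is precisely $\mathrm{ad}_{\mathcal{L}^*}$, this is the orthogonal decomposition into the range of an operator and the kernel of its adjoint. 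A direct computation then shows that under a homogeneous change of variables $u=v+\Phi_q(v)$ with $\Phi_q\in\mathcal{H}_q$, the degree-$q$ part of the transformed field becomes $R_q-\mathrm{ad}_{\mathcal{L}}\Phi_q$, while degrees below $q$ are untouched and degrees above $q$ are merely modified and stored for later stages. Choosing $\Phi_q$ to solve $\mathrm{ad}_{\mathcal{L}}\Phi_q=(\text{range-component of }R_q)$ leaves a residual $\mathcal{N}_q\in\ker(\mathrm{ad}_{\mathcal{L}^*})$; since solving this homological equation is a linear-algebra operation with coefficients depending smoothly on the $C^{k-q}$ data, $\Phi_q$ and $\mathcal{N}_q$ inherit the $C^{k-q}$ dependence on $\mu$, which yields property (i) and the regularity asserted in (a).

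I would then identify $\ker(\mathrm{ad}_{\mathcal{L}^*})$ with the characteristic condition (b): differentiating $\mathcal{N}_{\mu}(e^{t\mathcal{L}^*}v)=e^{t\mathcal{L}^*}\mathcal{N}_{\mu}(v)$ at $t=0$ yields $D\mathcal{N}_\mu(v)\mathcal{L}^*v=\mathcal{L}^*\mathcal{N}_\mu(v)$, i.e. $\mathrm{ad}_{\mathcal{L}^*}\mathcal{N}_\mu=0$, and conversely this infinitesimal relation integrates to the stated flow-commutation. Composing the transformations for $q=2,3,\dots,p$ produces a single degree-$p$ polynomial $\Pi_\mu$ realizing $u=v+\Pi_\mu(v)$; collecting the residuals gives $\mathcal{N}_\mu=\sum_q\mathcal{N}_q$, and all terms of degree exceeding $p$ together with the Taylor remainder assemble into $\rho(v,\mu)=o(|v|^p)$ of class $C^k$, which is (c). The tangency relations $\Pi_0(0)=0$, $D_v\Pi_0(0)=0$, $\mathcal{N}_0(0)=0$, $D_v\mathcal{N}_0(0)=0$ hold because every constructed term has degree at least $2$.

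Finally, for the symmetry statement (d): when the isometry $\mathcal{T}$ commutes with both $\mathcal{L}$ and $\mathcal{R}$, the operator $\mathrm{ad}_{\mathcal{L}}$ commutes with the induced representation $\Phi\mapsto\mathcal{T}^{-1}\Phi(\mathcal{T}\cdot)$ on $\mathcal{H}_q$; choosing the scalar product $\mathcal{T}$-invariant (possible since $\mathcal{T}$ is an isometry) makes $\mathrm{ad}_{\mathcal{L}^*}$ commute with it as well, so the splitting is $\mathcal{T}$-equivariant. As $R_q$ is $\mathcal{T}$-equivariant, its range- and kernel-components are too, whence $\Phi_q$ and $\mathcal{N}_q$, and therefore $\Pi_\mu$ and $\mathcal{N}_\mu$, can be chosen to commute with $\mathcal{T}$. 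The step I expect to be the main obstacle is the splitting lemma itself, together with the identification of $\ker(\mathrm{ad}_{\mathcal{L}^*})$ with the characteristic condition while simultaneously keeping that decomposition compatible with the group action and with the smooth $\mu$-dependence; once the splitting is secured, the inductive elimination of non-resonant terms and the accounting of regularity are routine.
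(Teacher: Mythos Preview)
Your proposal outlines the standard proof of the parameter-dependent normal form theorem via the homological equation and the splitting $\mathcal{H}_q=\mathrm{Range}(\mathrm{ad}_{\mathcal{L}})\oplus\ker(\mathrm{ad}_{\mathcal{L}^*})$, and the argument is correct in its essential structure. However, you should be aware that the paper does \emph{not} supply its own proof of this theorem: it is stated with a citation to \cite{HI, GSS, Yao} and used as a black box, so there is no ``paper's own proof'' to compare against. What you have written is essentially the argument one finds in Haragus--Iooss \cite{HI}, which is precisely the reference the authors defer to.
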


We have the following theorem regarding system \eqref{bifurcation equation}.

\begin{theorem}\label{plemma}
(Existence of parameter-dependent center manifolds with symmetry) For system \eqref{bifurcation equation}, there exists a map
$\Psi\in\mathcal{C}^k( Z_c\times \mathbb{R}^2, Z_h)$, with
$$\Psi(0, 0, 0)=0,\,\, D_U\psi(0,0, 0)=0,$$
and a neighborhood of $\mathcal{O}_U\times\mathcal{O}_{(\mu_1, \mu_2)}$ of $(0,
0)$ such that for $(\mu_1,\mu_2)\in \mathcal{O}{(\mu_1,\mu_2)}$, the manifold
$$\mathcal{M}_0(\mu_1,\mu_2):=\{U_0 +\Psi(U_0, \mu_1,\mu_2);\,\, U_0\in Z_c\}$$
is locally invariant and contains the set of bounded solutions of
the nonlinear perturbation system in $\mathcal{O}_U$ for all
$t\in\mathbb{R}$. Moreover, the center manifold reduction function is $O(2)$ equivariant on $ Z_c$.
\end{theorem}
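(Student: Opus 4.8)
The plan is to verify the three structural hypotheses of the parameter-dependent center manifold theorem (Theorem \ref{center manifold theorem}) for system \eqref{bifurcation equation} with the Hilbert space triplet $Z\subset Y=X$, and then to read off conclusions (a)--(c), obtaining the symmetry statement from Corollary \ref{coro}. I would begin with hypothesis (1). Since $\mathcal L_c$ is a fourth order differential operator, $\mathcal L_c:Z\to X$ is bounded, mapping $H^4_{per}$ into $L^2_{per}$ and preserving the mean-zero condition because each of its entries carries a spatial derivative. For the nonlinearity $R(U,\mu_1,\mu_2)=R_{11}(U)+R_{20}(U,U)+R_{30}(U,U,U)+\tilde R(U)$, each summand maps $Z$ into $Y=X$: the term $R_{11}$ applies $\partial_x^4$, respectively $\partial_x^2$, to an $H^4$ function and lands in $L^2$, while $R_{20}$, $R_{30}$, $\tilde R$ apply a single $\partial_x$ to products of $H^4$ functions, which again lie in $X$ by the algebra property of $H^4_{per}$, and each has mean zero owing to the outer $\partial_x$. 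Since $\sigma\in C^{\infty}$ near $0$, the remainder $\Gamma$ and hence $R$ is $C^k$ for every $k$. Finally $R(0,0,0)=0$ is immediate, and $D_U R(0,0,0)=0$ because at $(\mu_1,\mu_2)=(0,0)$ the only linear-in-$U$ term $R_{11}$ vanishes, while $R_{20}$, $R_{30}$, $\tilde R$ are at least quadratic in $U$.

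Hypothesis (2) is furnished by the spectral analysis of Section \ref{spectra}. Proposition \ref{cp} gives $\sigma_c(\mathcal L_c)=\{i\omega_c,-i\omega_c\}$, a finite set whose associated center eigenspace $Z_c$ is the four-dimensional space exhibited in Proposition \ref{parametrization}, so the center spectrum consists of finitely many eigenvalues of finite algebraic multiplicity; and Lemma \ref{gap} provides the uniform spectral gap $\sup\{Re\,\lambda;\lambda\in\sigma_s(\mathcal L_c)\}<-\gamma$ and $\inf\{Re\,\lambda;\lambda\in\sigma_u(\mathcal L_c)\}>\gamma$.

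The step I expect to be the main obstacle is hypothesis (3), the resolvent estimate. Because $Y=X$ here, I only need the Hilbert-space bound $\|(i\omega-\mathcal L_c)^{-1}\|_{X\to X}\lesssim 1/|\omega|$ for $|\omega|>\omega_0$. Working in Fourier variables, $(i\omega-\mathcal L_c)^{-1}$ is block diagonal with blocks $(i\omega-M_k)^{-1}$, so the task reduces to the uniform bound $\sup_{k\neq 0}\|(i\omega-M_k)^{-1}\|\lesssim 1/|\omega|$. Writing the inverse through its adjugate and determinant, the determinant behaves like $-\omega^2$ plus terms of lower order in $\omega$ while the adjugate entries grow at most linearly in $\omega$; the delicate point is to control the interplay between the Fourier index $k$ and the parameter $\omega$ so that no sequence $k=k(\omega)$ spoils the decay or produces accumulation of spectrum on the imaginary axis. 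It is precisely here that discarding the $k=0$ mode through the mean-zero restriction keeps the estimate uniform and guarantees $i\omega\in\rho(\mathcal L_c)$ for all large $|\omega|$.

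With (1)--(3) established, Theorem \ref{center manifold theorem} yields a reduction function $\Psi\in\mathcal C^k(Z_c\times\mathbb R^2,Z_h)$ with $\Psi(0,0,0)=0$ and $D_U\Psi(0,0,0)=0$, together with a manifold $\mathcal M_0(\mu_1,\mu_2)$ that is locally invariant and contains all bounded solutions staying in $\mathcal O_U$, which establishes every claim except the symmetry. For the symmetry, Corollary \ref{coro} shows that \eqref{bifurcation equation} is $O(2)$-equivariant, and Proposition \ref{proposition} supplies the commutation relations $[T_h,\mathcal L_c]=[T_h,\mathcal R]=[S,\mathcal L_c]=[S,\mathcal R]=0$ for the generating isometries $T_h$ and $S$. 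Conclusion (c) of Theorem \ref{center manifold theorem} then forces $\Psi$ to commute with $T_h$ and $S$, that is, $\Psi$ is $O(2)$-equivariant on $Z_c$, completing the argument.
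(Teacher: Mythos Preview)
Your plan is sound and matches the paper's architecture: verify hypotheses (1)--(3) of Theorem~\ref{center manifold theorem} for the triplet $Z\subset Y=X$, then read off tangency, local invariance, and equivariance via Corollary~\ref{coro}. Hypotheses (1) and (2) and the symmetry conclusion are handled in your plan exactly as in the paper.

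The one substantive difference is how you propose to obtain the resolvent bound in hypothesis (3). The paper does \emph{not} argue blockwise in Fourier. Instead it pairs the equation $(i\omega-\mathcal L_c)U=\tilde U$ with $U^*$, integrates by parts, separates real and imaginary parts, and combines the resulting identities with the interpolation inequality $|\partial_x u|_{L^2}^2\le \epsilon|\partial_x^2 u|_{L^2}^2+C(\epsilon)|u|_{L^2}^2$ to close the estimate $\|U\|_X\lesssim|\omega|^{-1}\|\tilde U\|_X$; the cases $a_c\ne0$ and $a_c=0$ are treated separately. Your Fourier-block route, bounding $\sup_{k\ne0}\|(i\omega-M_k)^{-1}\|$ via the adjugate--determinant formula, is a legitimate alternative and is in some sense more transparent given that $\mathcal L_c$ is already block-diagonal. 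However, your sketch (``the determinant behaves like $-\omega^2$ plus lower order in $\omega$ while the adjugate entries grow at most linearly in $\omega$'') suppresses the $k$-dependence: the adjugate entries are of size $|\omega|+k^4$ and the determinant carries $k^8$ terms, so to actually close the bound you must split into the regimes $k^4\lesssim|\omega|$ (Neumann series) and $k^4\gtrsim|\omega|$ (large real parts of $\lambda_j(k)$), and this case analysis---which you flag as the delicate point---is what replaces the interpolation step in the paper's energy argument. Either method works here; the energy method would transfer more readily to variable-coefficient perturbations, while your block approach exploits the constant-coefficient structure directly.
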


The proof of this theorem is essentially the same as the one in \cite{Yao}. However, we need to reproduce it here to clarify the ambiguity due to misprints/typos in \cite{Yao} which make the 
proof there hard to understand, and for completeness.

\begin{proof}
From the definition of the operator $\mathcal L_c$ and $R(U,\mu_1, \mu_2)$ in \eqref{bifurcation equation}, we know that the 
assumptions $(1)$ and $(2)$ in Theorem \ref{center manifold theorem} on the linear operator and nonlinearity hold. 
A subtle point here is to verify that $R(U,\mu)$ is in $Y$. In fact, if $\mu_1=0$, we can choose $Y$ to be the subspace of $H^2_{per}(-\pi, \pi)$ with elements having mean zero.
The spectrum decomposition assumption is a direction consequence of Proposition \ref{cp} and Lemma \ref{gap}. It is obvious that $i\omega\in\rho(\mathcal L_c)$ for $|\omega|\neq\omega_c$. 
To show the resolvent estimate in the Hilbert space triplet setting, we write $(i\omega-\mathcal L_c) U=\tilde U$ for $\tilde U=\begin{pmatrix}\tilde\tau\\ \tilde u \end{pmatrix} \in X$ and 
$U=\begin{pmatrix}\tau \\ u \end{pmatrix} \in Z $ and show that $\|U\|_{X}\lesssim\frac{1}{|\omega|}\|\tilde U\|_{X}$ for $|\omega|>\omega_0>\omega_c$ where $\omega_0\gg 1$ is a large constant. Without loss of generality, we just need to prove for $\omega\geq \omega_0$. Multiplying the equation $(i\omega-\mathcal L_c) U=\tilde U$ by $U^*$, integrating over $[-\pi, +\pi]$ and integrating by parts, we arrive at
$$i\omega|\tau|^2_{L^2}+a_c|\partial_x^2\tau|_{L^2}^2-\int \partial_x u\tau^*=\int \tilde{\tau}\tau^*;$$
$$\int-\sigma'(0)\partial_x\tau u^*+i \omega |u|_{L^2}^2-(a_c+1)k_0^2\int|\partial_x u|^2+\int|\partial_x^2 u|^2=\int\tilde{u}u^*.$$
Taking the imaginary and real parts respectively, we see

$$\omega|\tau|_{L^2}^2=Im\int \partial_x u\tau^* +Im \int \tilde{\tau}\tau^*,$$
$$\omega |u|_{L^2}^2=  \sigma'(0) Im  \int \partial_x\tau u^* +Im \int \tilde{u}u^*$$
and
$$a_c |\partial_x^2\tau|^2_{L^2}=Re\int \partial_x u\tau^* +Re\int \tilde{\tau}\tau^*,$$
$$-(a_c+1)k_0^2 |\partial_x u|^2_{L^2}+\int|\partial_x^2 u|^2=Re\int\tilde{u}u^* +Re\int \sigma'(0)\partial_x\tau u^*.$$

From the imaginary part equations, we get by using elementary
inequalities, Fourier analysis and mean zero property for elements in the space $X$, that
\begin{equation}\label{r}
\omega |\tau|^2_{L^2}\lesssim\frac{1}{\omega}\Big(|\partial_x u|^2_{L^2}+|\tilde{\tau}|^2_{L^2}  \Big)\leq \frac{1}{\omega}\Big(|\partial^2_x u|^2_{L^2}+|\tilde{\tau}|^2_{L^2}  \Big),
\end{equation}
\begin{equation}\label{i}
\omega |u|_{L^2}^2\lesssim \frac{1}{\omega}\Big(|\partial_x\tau|^2_{L^2}+|\tilde{u}|^2_{L^2}  \Big)\leq  \frac{1}{\omega}\Big(|\partial^2_x\tau|^2_{L^2}+|\tilde{u}|^2_{L^2}  \Big).
\end{equation}
From the real part equations, we get for $a_c\neq 0$ that
\begin{equation}\label{real1}
|\partial_x^2\tau|^2_{L^2}\lesssim|\tau|_{L^2}\Big( |\tilde{\tau}|_{L^2}+|\partial_x u|_{L^2}  \Big),\end{equation}
\begin{equation}\label{real2}
|\partial_x^2 u|^2_{L^2}\lesssim|u|_{L^2}\Big(
|\tilde{u}|_{L^2}+|\partial_x\tau|_{L^2}  \Big)+|\partial_x
u|^2_{L^2}.
\end{equation}
By interpolation, we know that for any $\epsilon>0$, the following holds
$$|\partial_x u|^2_{L^2}\leq \epsilon|\partial_x^2 u|^2_{L^2}+C(\epsilon)|u|^2_{L^2}.$$
We may pick $\epsilon$ so small that we can conclude from $\eqref{real2}$
that
$$|\partial_x^2 u|^2_{L^2}\lesssim|u|_{L^2}\Big(|\tilde{u}|_{L^2}+|\partial_x\tau|_{L^2} +|u|_{L^2}  \Big).$$
By interpolation in $|\partial_x\tau|_{L^2}$,
$$|\partial_x\tau|^2_{L^2}\lesssim\epsilon |\partial_x^2\tau|^2_{L^2}+C(\epsilon)|\tau|^2_{L^2},$$
we can get from \eqref{real1} and \eqref{real2} that
\begin{equation}\label{3}
\begin{cases}
|\partial_x^2\tau|^2_{L^2}\lesssim
|\tau|^2_{L^2}+|\tilde{\tau}|^2_{L^2}+\epsilon|\partial_x^2 u|^2_{L^2}+|u|^2_{L^2},\\
|\partial_x^2 u|^2_{L^2}\lesssim
|u|^2_{L^2}+|\tilde{u}|^2_{L^2}+\epsilon|\partial_x^2\tau|^2_{L^2}+\epsilon|\partial_x^2 u|^2_{L^2}+|\tau|^2_{L^2}.
\end{cases}
\end{equation}
Adding the two equations in \eqref{3} together and choosing $\epsilon$
smaller if necessary, we get
\begin{equation}\label{4}
|\partial_x^2\tau|^2_{L^2}+|\partial_x^2u|^2_{L^2}\lesssim|\tau|^2_{L^2}+|u|^2_{L^2}+|\tilde{\tau}|^2_{L^2}+|\tilde{u}|^2_{L^2}.
\end{equation}
Now we have in view of \eqref{r}, \eqref{i} and \eqref{4} that
\begin{align*}
\omega|\tau|^2_{L^2}&\lesssim\frac{1}{\omega}|\tilde{\tau}|^2_{L^2}+\frac{1}{\omega}|\partial_x^2
u|^2_{L^2}\\
&\lesssim\frac{1}{\omega}|\tilde{\tau}|^2_{L^2}+\frac{1}{\omega}\Big(|\tau|^2_{L^2}+|u|^2_{L^2}+|\tilde{\tau}|^2_{L^2}+|\tilde{u}|^2_{L^2}
\Big).
\end{align*}
\begin{align*}
\omega |u|^2_{L^2}&\lesssim\frac{1}{\omega}|\tilde{u}|^2_{L^2}+\frac{1}{\omega}|\partial_x^2
\tau|^2_{L^2}\\
&\lesssim\frac{1}{\omega}|\tilde{u}|^2_{L^2}+\frac{1}{\omega}\Big(|\tau|^2_{L^2}+|u|^2_{L^2}+|\tilde{\tau}|^2_{L^2}+|\tilde{u}|^2_{L^2}
\Big).
\end{align*}
Adding the above two inequalities together, we get
$$(-\frac{1}{\omega}+\omega) |U|^2_{L^2}\lesssim \frac{1}{\omega} |\tilde U|^2_{L^2},$$
which implies
$$|U|^2_{L^2}\lesssim\frac{1}{\omega(\omega-\frac{1}{\omega})}|\tilde U|^2_{L^2}
=\frac{1}{\omega^2 -1}|\tilde U|^2_{L^2}
\lesssim\frac{1}{\omega^2}|\tilde U|^2_{L^2}.$$
for $\omega\geq \omega_0$ large. Hence we arrive at the estimate of desired
form $|U|_{L^2}\lesssim\frac{1}{\tilde{\omega}}|\tilde U|_{L^2}$ if $a_c\neq 0$. 
If $a_c=0$, the terms $|\partial^2_x\tau|^2_{L^2}$ will not get involved in the estimates and the proof of resolvent estimate is significantly simpler.

From Corollary \eqref{coro}, system \eqref{bifurcation equation} exhibits $O(2)$ symmetry, so does the function $\Psi$.
\end{proof}

\begin{remark}\label{gcm}
The main concern of the above theorem is the behavior of the resolvent of $\mathcal L_c$ along the imaginary axis and far from the origin. The validity of the theorem lies in both the structure of the linear operator $\mathcal L_c$ itself and the working spaces under which we choose to consider the spectral problem. In general, the inequalities in \eqref{r} and \eqref{i} would not be true if $\int_{-\pi}^{\pi}\tau\,dx \neq 0$ and $\int_{-\pi}^{\pi} u\,dx\neq 0$. Hence the existence of center manifold without mean zero assumption in the space triplet $Z\subset Y\subset X$ would be a problem here. 
\end{remark}

\begin{remark}
The expression $(4.8)$ in page 67 of Yao \cite{Yao} should be replaced by
\begin{equation*}
\begin{cases}
|\partial_x^2\tau|^2_{L^2}\lesssim
|\tau|^2_{L^2}+|\tilde{\tau}|^2_{L^2}+\epsilon|\partial_x^2 u|^2_{L^2}+|u|^2_{L^2},\\
|\partial_x^2 u|^2_{L^2}\lesssim
|u|^2_{L^2}+|\tilde{u}|^2_{L^2}+\epsilon|\partial_x^2\tau|^2_{L^2}+\epsilon|\partial_x^2 u|^2_{L^2}+|\tau|^2_{L^2}.
\end{cases}
\end{equation*}
\end{remark}

Let us briefly discuss the tangency and decomposition. The tangency properties in the center manifold theorem (Theorem \ref{center manifold theorem}) and normal form theorem (Theorem \ref{normal form theorem}) are crucial for determining the reduced dynamics. To get the reduced dynamics, the routine way is to decompose $U\in Z_c$ as $U=U_c+\Psi(U_c, \mu)$ where $U_c\in Z_c$ and $\Psi(U_c, \mu)\in Z_h$ and then use the flow invariance. The following observation will help us in identifying the reduced dynamics. The reduced dynamics for $U_c$ is a finite dimensional system. In view of the normal form theorem, we can decompose $U_c$ at the beginning as $U_c=V_c +\Pi_{\mu}(V_c)$ where $U_c\in Z_c$ and $\Pi_{\mu}(V_c)\in Z_c$ according to normal form theorem. As a result, we get $U=V_c+\Pi_{\mu}(V_c)+\Psi(V_c +\Pi_{\mu}(V_c), \mu):=V_c+\Psi(V_c, \mu)$ with $V_c\in Z_c$ and $\Psi(V_c, \mu):=\Pi_{\mu}(V_c)+\Psi(V_c +\Pi_{\mu}(V_c), \mu)\in Z$. 
We find  that the tangency is preserved:
$\Psi(V_c,\mu)\Big |_{V_c=0,\mu=0}=0$ and $D_{V_c}\Psi(V_c,\mu)\Big |_{V_c=0,\mu=0}=0$.
On the center space $Z_c$, we can coordinate $V_c$ as $V_c=z_1(t)\xi_0(k_0)+z_1(t)^*\xi_0(k_0)^*+z_2(t)\xi_1(k_0)+z_2(t)\xi_1(k_0)^*$. Now 
\begin{align*}
U&=z_1(t)\xi_0(k_0)+z_1(t)^*\xi_0(k_0)^*+z_2(t)\xi_1(k_0)+z_2(t)^*\xi_1(k_0)^*\\&+\Psi(z_1 (t)\xi_0(k_0)+z_1 (t)^*\xi_0(k_0)^*+z_2(t)\xi_1(k_0)+z_2(t)^*\xi_1(k_0)^*, \mu)\\&:=z_1(t)\xi_0(k_0)+z_1(t)^*\xi_0(k_0)^*+z_2(t)\xi_1(k_0)+z_2(t)^*\xi_1(k_0)^*\\&+\Psi(z_1(t), z_1(t)^*, z_2(t), z_2(t)^*).
\end{align*} In determining the parameters in the $O(2)$ equivariant normal form in the following section, we will adopt this decomposition.

\section{Analysis}\label{analysis}

\setcounter{equation}{0}
\setcounter{theorem}{0}
Though a normal form is not always necessary in computing the dynamics on the center manifold(s) as in certain cases one can directly construct approximations of the center manifold function(s) by flow invariance, it helps a lot in making computations. In our computations here, we will use both approximation and normal form.
From the spectral scenario of $\mathcal L_c$, the characteristic condition $(b)$ on the possible form of normal form and the symmetry of the nonlinear 
perturbation system \eqref{perturb}, we could determine a normal form of the reduced dynamics. The analysis we have done to now and the analysis in
\cite{HI} (see pages 129-131) yield the following theorem:

\begin{theorem}\label{prop}
The reduced dynamical system of \eqref{bifurcation equation} on the parameter dependent center manifold has
the following normal form
\begin{equation}\label{normalform}
\begin{cases}
\frac{d}{dt}z_1=i\omega_c z_1+ z_1P(|z_1|^2, |z_2|^2, \mu_1, \mu_2)+\rho(z_1, z_2, z_1^*, z_2^*,\mu_1,\mu_2)\\
\frac{d}{dt}z_2=i\omega_c z_2+ z_2P(|z_2|^2, |z_1|^2, \mu_1, \mu_2)+\rho(z_2, z_1, z_2^*, z_1^*,\mu_1,\mu_2)
\end{cases}
\end{equation}
in which $P$ is a polynomial of degree $p$ in its first two
arguments with coefficients depending on $\mu_1$ and $\mu_2$, and $\rho(z_1, z_2, z_1^*,
z_2^*, \mu_1, \mu_2)=\mathcal{O}((|z_1|+|z_2|)^{2p+3})$ with the following property, for $h\in\mathbb{R}^1/[-\pi, \pi]$,
$$\rho\Big(\exp(ih)z_1, \exp(ih)z_2, \exp(-ih)z_1^*,
\exp(-ih)z_2^*, \mu_1, \mu_2\Big)=\exp(ih)\rho(z_1, z_2, z_1^*, z_2^*, \mu_1,\mu_2).$$ Therefore, the normal form takes the following form
\begin{equation}\label{nf}
\begin{cases}
\frac{d}{dt}z_1=i\omega_c z_1+ z_1(\mathbbm{a}(\mu_1,\mu_2)+\mathbbm{b}(\mu_1,\mu_2)|z_1|^2+\mathbbm{c}(\mu_1,\mu_2)|z_2|^2)+\cdot\cdot\cdot\\
\frac{d}{dt}z_2=i\omega_c z_2+
z_2(\mathbbm{a}(\mu_1,\mu_2)+\mathbbm{b}(\mu_1,\mu_2)|z_2|^2+\mathbbm{c}(\mu_1,\mu_2)|z_1|^2)+\cdot\cdot\cdot
\end{cases}
\end{equation}
where $\mathbbm{a,b,c}$ are complex coefficients depending on $\mu_1$ and $\mu_2$.
\end{theorem}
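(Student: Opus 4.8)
The plan is to derive the normal form \eqref{nf} as a direct consequence of three ingredients already assembled in the paper: the spectral scenario of $\mathcal{L}_c$ (Proposition \ref{cp}), the $O(2)$ symmetry (Corollary \ref{coro} and the group-action formulas following Proposition \ref{parametrization}), and the characteristic condition (b) in the normal form theorem (Theorem \ref{normal form theorem}). First I would invoke Theorem \ref{plemma} to reduce the dynamics to the four-dimensional center space $Z_c$, coordinatized by $(z_1, z_1^*, z_2, z_2^*)$ through the parametrization $z_1\xi_0 + z_1^*\xi_0^* + z_2\xi_1 + z_2^*\xi_1^*$. On $Z_c$, $\mathcal{L}_c$ acts as $\mathrm{diag}\{i\omega_c, -i\omega_c, i\omega_c, -i\omega_c\}$, so the adjoint $\mathcal{L}_c^*$ generates the flow $e^{t\mathcal{L}_c^*}$ acting diagonally by $(e^{-i\omega_c t}, e^{i\omega_c t}, e^{-i\omega_c t}, e^{i\omega_c t})$ on the coordinates. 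Applying Theorem \ref{normal form theorem} gives a polynomial nonlinearity $\mathscr{N}_\mu$ satisfying the characteristic condition $\mathcal{N}_\mu(e^{t\mathcal{L}_c^*}v) = e^{t\mathcal{L}_c^*}\mathcal{N}_\mu(v)$.

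Next I would translate this commutation into a resonance condition on monomials. A monomial $z_1^{p_1}(z_1^*)^{q_1}z_2^{p_2}(z_2^*)^{q_2}$ appearing in the $z_1$-component of $\mathscr{N}_\mu$ must, under the characteristic condition, satisfy $-(p_1-q_1+p_2-q_2)\omega_c = -\omega_c$, i.e.\ $p_1 - q_1 + p_2 - q_2 = 1$. Simultaneously, the $T_h$-equivariance from Proposition \ref{proposition}, which by Theorem \ref{center manifold theorem}(c) and Theorem \ref{normal form theorem}(d) is inherited by both $\Psi$ and $\mathscr{N}_\mu$, forces the same integer constraint (since $T_h$ acts on the coordinates by $(e^{ih}, e^{-ih}, e^{ih}, e^{-ih})$); these two conditions coincide and select exactly the monomials of the form $z_1 \cdot (\text{polynomial in } |z_1|^2, |z_2|^2)$. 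This yields the structure $\frac{d}{dt}z_1 = i\omega_c z_1 + z_1 P(|z_1|^2, |z_2|^2, \mu_1, \mu_2) + \rho$, with $P$ a polynomial of degree $p$ in its first two arguments. The equivariance of $\rho$ stated in \eqref{normalform} follows from the $T_h$-equivariance of the full vector field.

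The second component is then obtained not by repeating the computation but by invoking the reflection symmetry $S$. From the action formulas $S\xi_0 = \xi_1$, $S\xi_1 = \xi_0$ (and conjugates), the operator $S$ interchanges the coordinates $z_1 \leftrightarrow z_2$ and $z_1^* \leftrightarrow z_2^*$. Since $\mathscr{N}_\mu$ commutes with $S$ by Theorem \ref{normal form theorem}(d), the $z_2$-component is forced to be the $z_1 \leftrightarrow z_2$ swap of the $z_1$-component, giving $z_2 P(|z_2|^2, |z_1|^2, \mu_1, \mu_2)$ with the \emph{same} polynomial $P$ and the symmetric remainder $\rho(z_2, z_1, z_2^*, z_1^*, \mu_1, \mu_2)$. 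This is precisely the symmetric pairing displayed in \eqref{normalform}. Truncating $P$ at quadratic order in $(|z_1|^2, |z_2|^2)$ and writing $\mathbbm{a}(\mu_1,\mu_2) = P(0,0,\mu_1,\mu_2)$, $\mathbbm{b}(\mu_1,\mu_2) = \partial_{1}P(0,0,\mu_1,\mu_2)$, $\mathbbm{c}(\mu_1,\mu_2) = \partial_{2}P(0,0,\mu_1,\mu_2)$ gives the final form \eqref{nf}.

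The main obstacle is not the algebra but the \emph{consistency} of the two symmetry reductions. One must check that imposing $S$-equivariance does not over-constrain the system beyond what the characteristic condition and $T_h$-equivariance already impose — that is, that the $S$-symmetric ansatz is genuinely compatible and simply relates the two components rather than killing admissible terms. This is exactly the subtlety flagged in the discussion after Corollary \ref{coro}: one needs the symmetry to survive truncation, so that dropping higher-order terms yields a system with the same $O(2)$-equivariance. I would therefore be careful to verify that the full group $O(2) = \langle T_h, S \rangle$, with the relation $T_h S = S T_{-h}$ from Proposition \ref{proposition}, acts consistently on $Z_c$ and that the combined resonance-plus-equivariance selection is internally coherent; once this is confirmed, the normal form \eqref{nf} follows. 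The explicit values of $\mathbbm{a}, \mathbbm{b}, \mathbbm{c}$ are deferred to the subsequent computation using the center manifold approximation, as the present theorem asserts only the \emph{form} of the reduced dynamics.
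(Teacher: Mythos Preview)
Your overall framework is right --- reduce to $Z_c$ via Theorem \ref{plemma}, apply the normal form Theorem \ref{normal form theorem}, and then combine the characteristic condition with the inherited $O(2)$-equivariance --- and this is exactly the route the paper indicates by citing the analysis in \cite{HI}, pages 129--131. However, there is a concrete error in your monomial selection that breaks the argument as written.

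You state that $T_h$ acts on the coordinates $(z_1,z_1^*,z_2,z_2^*)$ by $(e^{ih},e^{-ih},e^{ih},e^{-ih})$. This is incorrect: since $\xi_0$ carries the Fourier mode $e^{ik_0x}$ while $\xi_1$ carries $e^{-ik_0x}$, the paper records $T_h\xi_0=e^{ik_0h}\xi_0$ but $T_h\xi_1=e^{-ik_0h}\xi_1$, so the induced action is $(e^{ik_0h},e^{-ik_0h},e^{-ik_0h},e^{ik_0h})$. Consequently the $T_h$-equivariance constraint on a monomial $z_1^{p_1}(z_1^*)^{q_1}z_2^{p_2}(z_2^*)^{q_2}$ in the $z_1$-component is $p_1-q_1-p_2+q_2=1$, which is \emph{not} the same as the characteristic-condition constraint $p_1-q_1+p_2-q_2=1$. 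Your claim that ``these two conditions coincide'' is therefore false, and more importantly, the single constraint $p_1-q_1+p_2-q_2=1$ that you retain is too weak: it admits, for instance, the monomials $z_2$ and $z_1^*z_2^2$ in the $z_1$-equation, neither of which is of the form $z_1\cdot(\text{polynomial in }|z_1|^2,|z_2|^2)$. The correct argument uses \emph{both} independent constraints; adding and subtracting them yields $p_1-q_1=1$ and $p_2=q_2$, which is precisely what forces the structure $z_1P(|z_1|^2,|z_2|^2,\mu_1,\mu_2)$. Your use of the reflection $S$ to pass from the $z_1$-equation to the $z_2$-equation with the same $P$ is correct and is the standard way to finish.
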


Notice that the analysis of \cite{HI} (see pages 129-131) in determining a normal form is for the case $\mu\in \mathbbm R^1$. But the analysis there holds for multidimensional parameter by checking
the statement of the normal form theorem (Theorem \ref{normal form theorem}) and repeating the argument. Unfortunately, the normal form in page 132 of \cite{HI} does not work here due to the higher dimension of the bifurcation parameters. In our case here, it is much more complicated. After realizing these issues, we now proceed to examine the coefficients $\mathbbm{a, b, c}$ in the above normal form. Our strategies to compute these coefficients are the following: (1) we first compute $\mathbbm {a}$ by the first order approximation of the center manifold function. (2) we examine $\mathbbm{b,c}$ after obtaining $\mathbbm{a}$. For our purpose, we do not need to determine $\mathbbm{b, c}$ completely. During the complicated process, we need also come back and forth to get needed ingredients. The reason that we can not avoid the back-and-forth procedure is the dependence of $\mathbbm{b,c}$ on $\mu_1,\mu_2$

To proceed, we write $\Psi(z_1, z_2, z_1^*, z_2^*, \mu)$ as 
$$\Psi(z_1, z_2, z_1^*, z_2^*, \mu)=\sum_{p+q+r+s+l+m\geq1}\Psi_{pqrslm}z_1^pz_1^{*q}z_2^rz_2^{*s}\mu_1^l\mu_2^m,$$
By conjugacy and the fact that our original system is real, we have
$$\Psi_{pqrslm}=\Psi^*_{qpsrlm}\in Y.$$
By tangency in Theorem \ref{center manifold theorem}, Theorem \ref{normal form theorem} and the discussion at the end of Section \ref{symmetry}, we have
$$ \Psi_{100000}=\Psi_{010000}=\Psi_{001000}=\Psi_{000100}=0.$$
Therefore, we can write the function $\Psi(z_1, z_2, z_1^*, z_2^*, \mu)$ as a sum with summands given by
$$\Psi_{000010}\mu_1, \Psi_{000001}\mu_2, \Psi_{100010}z_1\mu_1, \Psi_{010010}z_1^*\mu_1, \Psi_{10001}z_1\mu_2, \Psi_{01001}z_1^*\mu_2,\Psi_{001010}z_2\mu_1,$$ 
$$\Psi_{000110}z_2^*\mu_1, \Psi_{000101}z_2\mu_2,\Psi_{000101}z_2^*\mu_2, \Psi_{110000}z_1z_1^*, \Psi_{200000}z_1^2, \Psi_{020000}z_1^{*2},\Psi_{001100}z_2z_2^*,$$
 $$ \Psi_{002000}z_2^2, \Psi_{000200}z_2^{*2}, \Psi_{101000}z_1z_2, \Psi_{010100}z_1^*z_2^*, \Psi_{100100}z_1z_2^*, \Psi_{011000}z_1^*z_2, \Psi_{111000}z_1z_1^*z_2,$$
 $$\Psi_{110100}z_1z_1^*z_2^*, \Psi_{101100}z_1z_2z_2^*, \Psi_{011100}z_1^*z_2z_2^*, \Psi_{21000}z_1^2z_1^*, etc.$$
Next, we proceed the computations which are very complicated. However, there are also some good propositions hidden in the structure which can be seen from the coefficients in front of
the terms $R_{20}$ and $R_{30}$ below.

\textbf{step1}. Compute $\mathbbm{a}$. Though the center manifold reduction functions are not unique, their approximations up to any finite order allowed by the
smoothness of the nonlinearity are unique once a basis of the center space is chosen. By our choice of the basis for the center manifold, the first approximation of the dynamics on
the center manifold is given by
$$\partial_t(z_1\xi_0+z_1^*\xi_0^*+z_2\xi_1+z_2^*\xi_1^*)=L_c (z_1\xi_0+z_1^*\xi_0^*+z_2\xi_1+z_2^*\xi_1^*)+ R_{11}(z_1\xi_0+z_1^*\xi_0^*+z_2\xi_1+z_2^*\xi_1^*)+h.o.t.$$  
where ``$h.o.t$" means ``higher order terms". Let $P$ denote the canonical projection onto the center space. The above equation can be written as
\begin{equation}
\frac{d}{dt}\begin{pmatrix} z_1\\ z_1^*\\ z_2\\ z_2^*\end{pmatrix}=\begin{pmatrix} i\omega_c & 0 &0 &0 \\ 0 & -i\omega_c & 0 &0 \\ 0& 0& i\omega_c &0\\ 0& 0 & 0 & -i\omega_c\end{pmatrix}\begin{pmatrix} z_1\\ z_1^*\\ z_2\\ z_2^*\end{pmatrix}+PR_{11}(z_1\xi_0+z_1^*\xi_0^*+z_2\xi_1+z_2^*\xi_1^*)+h.o.t.
\end{equation}
To determine $\mathbbm{a}$, we do not need the full projection $P$. Rather, we will select one equation, say the equation on $z_1$. Comparing with the normal form, we see that the coefficient $\mathbbm a$ is given by
\begin{equation}\label{aaa}
\mathbbm az_1=\langle R_{11}(z_1\xi_0+z_1^*\xi_0^*+z_2\xi_1+z_2^*\xi_1^*), \eta\rangle.
\end{equation}
As all the ingredients in the right hand side of \eqref{aaa} are known, easy computations show that
\begin{equation}
\mathbbm a=\frac{k_0^2}{2\omega_c}(\omega_c-ia_ck_0^4)(-\mu_1k_0^2+\mu_2).
\end{equation}\\

In the following steps, we need to expand the equation
$\partial_t U=\mathcal L_c U +R_{11}(U)+R_{20}(U)+R_{30}(U)+\Gamma(U)$
by using the expression $U=z_1\xi_0+z_1^*\xi_0^*+z_2\xi_1+z_2^*\xi_1^*+\sum_{p+q+r+s+l+m\geq1}\Psi_{pqrslm}z_1^pz_1^{*q}z_2^rz_2^{*s}\mu_1^l\mu_2^m$ and
the normal form equations to compare coefficients.\\

 \textbf{step 2}. Compare $O(\mu_1)$ and $O(\mu_2)$ terms. For these two kinds of terms, we get 
 $$0=\mathcal L _c \Psi_{000010},\, 0=\mathcal L_c \Psi_{000001}$$
 respectively. Since $0\in\rho(\mathcal L_c)$, we can conclude that $\Psi_{000010}=0, \Psi_{000001}=0$. \\

 \textbf{step 3}. Compare $O(z_1z_1^*)$ term. We get the following relation
 $$\mathcal L_c \Psi_{110000}+2R_{20}(\xi_0, \xi_0^*)=0.$$
 Due to the form of $R_{20}$, we see easily that $R_{20}(\xi_0, \xi_0^*)=0$, which in turn gives
 $\Psi_{110000}=0$ in view of $0\in\rho(\mathcal L_c)$.\\
 
  \textbf{step 4}. Compare $O(z_1^2)$ term. The result is
  $$2i\omega_c\Psi_{200000}=\mathcal L_c \Psi_{200000}+R_{20}(\xi_0,\xi_0).$$
 Noticing that $2i\omega_c\in \rho(\mathcal L_c)$, we conclude $\Psi_{200000}=(2i\omega_c-\mathcal L_c)^{-1}R_{20}(\xi_0,\xi_0)$.
To get $\Psi_{200000}$, we first observe 
\begin{equation*}R_{20}(\xi_0, \xi_0)=\frac{\sigma''(0)}{2}\partial_x\begin{pmatrix}0\\ \exp(ik_0x)\exp(ik_0x) \end{pmatrix}
=\begin{pmatrix}0\\ ik_0\sigma''(0)\exp(2ik_0x) \end{pmatrix}.
\end{equation*}
Therefore, the system for  $\Psi_{200000}$ is given by
$$(2i\omega_c-\mathcal{L}_c)\Psi_{200000}=\begin{pmatrix}0\\ ik_0\sigma''(0)\exp(2ik_0x) \end{pmatrix}.$$
Obviously, we need to seek a solution of the form 
$$\Psi_{200000}=\exp(2ik_0x)V=\exp(2ik_0x)\begin{pmatrix}v_1\\ v_2 \end{pmatrix}.$$ 
We get the system satisfied by vector $V$
$$\Big(2i\omega_c-\begin{pmatrix}-a_c(2ik_0)^4 & 2ik_0\\ \sigma'(0)2ik_0 & -\delta_c(2ik_0)^2-(2ik_0)^4\end{pmatrix}\Big)\begin{pmatrix}v_1\\ v_2 \end{pmatrix}=\begin{pmatrix}0\\ ik_0\sigma''(0) \end{pmatrix},$$
i.e., 
$$\begin{pmatrix}2i\omega_c+16a_c k_0^4 & -2ik_0\\ -\sigma'(0)2ik_0 &2i\omega_c-4\delta_c k_0^2+16k_0^4\end{pmatrix}\begin{pmatrix}v_1\\ v_2 \end{pmatrix}=\begin{pmatrix}0\\ ik_0\sigma''(0) \end{pmatrix},$$
As the coefficient matrix above is invertible due to the fact $2i\omega_c\in\rho(\mathcal L_c)$, $V$ can be solved uniquely:
$$v_1=\frac{\sigma''(0)k_0}{-2\sigma'(0)k_0+(2i\omega_c-4\delta_c k_0^2+16k_0^4)(-i\omega_c k_0^{-1}-8a_c k_0^3)},$$
$$v_2=\frac{\sigma''(0)(\omega_c-8ia_c k_0^4)}{-2\sigma'(0)k_0+(2i\omega_c-4\delta_c k_0^2+16k_0^4)(-i\omega_c k_0^{-1}-8a_c k_0^3)}.$$
Hence we have
$$\Psi_{200000}=\exp(2ik_0x)\begin{pmatrix}\frac{\sigma''(0)k_0}{-2\sigma'(0)k_0+(2i\omega_c-4\delta_c k_0^2+16k_0^4)(-i\omega_c k_0^{-1}-8a_c k_0^3)},\\ 
\frac{\sigma''(0)(\omega_c-8ia_c k_0^4)}{-2\sigma'(0)k_0+(2i\omega_c-4\delta_c k_0^2+16k_0^4)(-i\omega_c k_0^{-1}-8a_c k_0^3)}. \end{pmatrix}.$$
\\
  
\textbf{step 5}. Compare $O(z_2z_2^*)$ term. We obtain
$$0=\mathcal L_c \Psi_{001100}+2R_{20}(\xi_1,\xi_1^*).$$
Again, we can conclude that $ \Psi_{001100}=0$ in view that $R_{20}(\xi_1,\xi_1^*)=0$ and $0\in\rho(\mathcal L_c)$.\\
  
\textbf{step 6}. Compare $O(z_1z_2)$ term. The result is
$$2i\omega_c\Psi_{101000}=\mathcal L_c\Psi_{101000}+2R_{20}(\xi_0,\xi_1),$$
which is 
$$(2i\omega_c-\mathcal L_c)\Psi_{101000}=2R_{20}(\xi_0,\xi_1).$$
We can conclude $\Psi_{101000}=0$ in view that $R_{20}(\xi_0,\xi_1)=0$ and $2i\omega_c\in\rho(\mathcal L_c)$.\\

\textbf{step 7}. Compare $O(z_1z_2^*)$ term.  We obtain
$$0=\mathcal L_c\Psi_{100100}+2R_{20}(\xi_0, \xi_1^*).$$
Noticing that
\begin{align*}
-2R_{20}(\xi_0, \xi_1^*)=-2\frac{\sigma''(0)}{2}\partial_x\begin{pmatrix}0\\ \exp(ik_0x)\exp(ik_0x) \end{pmatrix}
=\begin{pmatrix}0\\ -2ik_0\sigma''(0)\exp(2ik_0x) \end{pmatrix},
\end{align*} 
the system for $\Psi_{100100}$ is therefore given by
$$\begin{pmatrix}-a_c\partial_x^4 & \partial_x\\ \sigma'(0)\partial_x & -\delta_c\partial_x^2-\partial_x^4\end{pmatrix}\Psi_{100100}=\begin{pmatrix}0\\ -2ik_0\sigma''(0)\exp(2ik_0x) \end{pmatrix}.$$
Similarly as in the computation of $\Psi_{200000}$, we need to seek a solution of the form:
$$\Psi_{100100}=\exp(2ik_0x)W=\exp(2ik_0x)\begin{pmatrix}w_1\\ w_2 \end{pmatrix}.$$ 
In Fourier side,  we get the algebraic system for $W$
$$\begin{pmatrix}-a_c(2ik_0)^4 & 2ik_0\\ \sigma'(0)2ik_0 & -\delta_c(2ik_0)^2-(2ik_0)^4\end{pmatrix}\begin{pmatrix}w_1\\ w_2 \end{pmatrix}=\begin{pmatrix}0\\ -2ik_0\sigma''(0)\end{pmatrix},$$
i.e.,
$$\begin{pmatrix}-16a_c k_0^4 & 2ik_0\\ \sigma'(0)2ik_0 &4\delta_ck_0^2-16k_0^4\end{pmatrix}\begin{pmatrix}w_1\\ w_2 \end{pmatrix}=\begin{pmatrix}0\\ -2ik_0\sigma''(0)\end{pmatrix}.$$
As $0\in\rho(\mathcal L_c)$, we can solve $W$ uniquely
$$w_1=\frac{-\sigma''(0)}{\sigma'(0)-16a_ck_0^4(\delta_c-4k_0^2)},$$
$$w_2=\frac{8a_c\sigma''(0)k_0^3i}{\sigma'(0)-16a_ck_0^4(\delta_c-4k_0^2)}.$$
Hence we have
$$\Psi_{100100}=\exp(2ik_0x)\begin{pmatrix} \frac{-\sigma''(0)}{\sigma'(0)-16a_ck_0^4(\delta_c-4k_0^2)} \\ \frac{8a_c\sigma''(0)k_0^3i}{\sigma'(0)-16a_ck_0^4(\delta_c-4k_0^2)} \end{pmatrix}.$$
 \\
 
 Now we come back to the expansion of $\Psi(z_1, z_1^*, z_2, z_2^*, \mu_1, \mu_2)$. Due to the above steps, we can reduce the list of summand to the following:
 $$\Psi_{100010}z_1\mu_1, \Psi_{010010}z_1^*\mu_1, \Psi_{10001}z_1\mu_2, \Psi_{01001}z_1^*\mu_2,\Psi_{001010}B\mu_1,$$ 
$$\Psi_{000110}z_2^*\mu_1, \Psi_{000101}z_2\mu_2,\Psi_{000101}z_2^*\mu_2, \Psi_{200000}z_1^2, \Psi_{020000}z_1^{*2}$$
 $$ \Psi_{002000}z_2^2, \Psi_{000200}z_2^{*2}, \Psi_{100100}z_1z_2^*, \Psi_{011000}z_1^*z_2, \Psi_{111000}z_1z_1^*z_2,$$
 $$\Psi_{110100}z_1z_1^*z_2^*, \Psi_{101100}z_1z_2z_2^*, \Psi_{011100}z_1^*z_2z_2^*, \Psi_{21000}z_1^2z_1^*, etc.$$
 Another complication comes from the dependence of $\mathbbm{b,c}$ on the parameters $\mu_1, \mu_2$. We also need to expand them as $\mathbbm{b}=\mathbbm a_0 +\mathbbm a_1\mu_1+\mathbbm a_2\mu_2+h.o.t$ and $\mathbbm{c}=\mathbbm c_0 +\mathbbm c_1\mu_1+\mathbbm c_2\mu_2+h.o.t$ for comparisons as we do not exactly know \textit{a priori} how they interact with other terms.\\
 
 \textbf{step 8}. Compare $O(z_1^2z_1^*)$ term (notice that $z_1^2z_1^*=|z_1|^2z_1$). The result is
  $$\mathbbm{b}_0\xi_0+i\omega_c\Psi_{210000}=\mathcal L_c\Psi_{210000}+2R_{20}(\xi_0^*, \Psi_{200000})+3R_{30}(\xi_0,\xi_0, \xi_0^*).$$
 In general, we need to have a term $2R_{20}(\xi_0, \Psi_{110000})$ from comparisons in the right hand side of the above expression. Here we have shown that $\Psi_{110000}=0$ in \textbf{step 3}.
 All the terms in the right hand side of the above equality can be computed explicitly except the term involving $\Psi_{210000}$. In fact, we do not have a relation for $\Psi_{210000}$ to be solved explicitly through comparing the coefficients. Nevertheless, we can use duality to determine $\mathbbm b_0$, i.e., writing the above relation as
   $$\mathbbm{b}_0\xi_0+(i\omega_c-\mathcal L_c)\Psi_{210000}=2R_{20}(\xi_0^*, \Psi_{200000})+3R_{30}(\xi_0,\xi_0, \xi_0^*)$$
 and computing the inner product with $\eta\in \ker (i\omega_c-\mathcal L_c)^*$. Easy computations yield
 \begin{equation}
 \mathbbm b_0=\frac{ik_0^3\sigma''(0)^2/2\omega_c}{-2\sigma'(0) k_0+(2i\omega_c-4\delta_c k_0^2+16k_0^4)(-i\omega_c k_0^{-1}-8a_c k_0^3)}+i\frac{\sigma'''(0)k_0^2}{4\omega_c}.
 \end{equation}
 For completeness, we give some details here.
$$\langle \xi_0, \eta\rangle=1, \langle(i\omega_c-\mathcal L_c)\Psi_{210000},\eta\rangle= \langle\Psi_{210000},(i\omega_c-\mathcal L_c)^*\eta\rangle=0.$$
$$\langle 2R_{20}(\xi_0^*, \Psi_{200000}),\eta\rangle=\frac{ik_0^3\sigma''(0)/2\omega_c}{-2\sigma'(0)k_0+(2i\omega_c-4\delta_c k_0^2+16k_0^4)(-i\omega_c k_0^{-1}-8a_c k_0^3)}$$
$$\langle 3R_{30}(\xi_0, \xi_0, \xi_0^*), \eta \rangle=\frac{1}{2}\sigma'''(0)ik_0\frac{k_0}{4\pi\omega_c}2\pi=i\frac{\sigma'''(0)k_0^2}{4\omega_c},$$

\textbf{step 9}. Compare $O(z_1z_2z_2^*)$ term (notice that $z_1z_2z_2^*=z_1|z_2|^2$). We obtain
 $$\mathbbm{c}_0\xi_0+i\omega_c\Psi_{101100}=\mathcal L_c\Psi_{101100}+2R_{20}(\xi_1, \Psi_{100100})+6R_{30}(\xi_0,\xi_1, \xi_1^*).$$
 In general, we will have two additional terms $2R_{20}(\xi_0, \Psi_{001100})$, $2R_{20}(\xi_1^*, \Psi_{101000})$ in the right hand side of the above expression from comparison. Here we have shown in
 \textbf{step 5} and  \textbf{step 6} that $\Psi_{001100}=0$ and $\Psi_{101000}=0$ respectively.
 The procedure to get $\mathbbm c_0$ is similar as above. Direct computations yields
 \begin{equation}
 \mathbbm c_0=\frac{-ik_0^2\sigma''(0)^2}{2\omega_c(\sigma'(0)-16a_ck_0^4(\delta_c-4k_0^2)}+i\frac{\sigma'''(0)k_0^2}{2\omega_c}.
 \end{equation}
 
Similarly, some computational details are as follows.
$$\langle \xi_0, \eta\rangle=1, \langle(i\omega_c-\mathcal L_c)\Psi_{101100},\eta\rangle=0.$$ 
 \begin{align*}
&2R_{20}(\xi_1,\Psi_{10010}), \eta\rangle\\&=\sigma''(0)\langle \begin{pmatrix} 0\\ \partial_x\Big( \exp(-ik_0x)\frac{-\exp(2ik_0x)\sigma''(0)}{\sigma'(0)-16a_ck_0^4(\delta_c-4k_0^2)}  \Big ) \end{pmatrix}, \frac{k_0}{4\pi \omega_c}\exp(ik_0x)\begin{pmatrix} -ia_c k_0^3+\frac{\omega_c}{k_0}\\1\end{pmatrix}\rangle\\
&=\frac{-ik_0^2\sigma''(0)^2}{2\omega_c(\sigma'(0)-16a_ck_0^4(\delta_c-4k_0^2))},
\end{align*}
 $$\langle 6R_{30}(\xi_0, \xi_1, \xi_1^*), \eta\rangle=\sigma'''(0)ik_0\frac{k_0}{4\pi\omega_c}2\pi=i\frac{\sigma'''(0)k_0^2}{2\omega_c}.$$ This completes the current step.\\
 
Now we come back again to collect the information gathered up to now that is related to the parameters $\mathbbm a$, $\mathbbm b_0$ and $\mathbbm c_0$. The real and imaginary parts of them will be important for us, and are computed as listed below:
\begin{equation}\label{mathbbma}
\mathbbm a_r=\frac{k_0^2}{2}(-\mu_1 k_0^2+\mu_2),\, \mathbbm a_i=\frac{a_c k_0^6}{2\omega_c}(-\mu_1 k_0^2+\mu_2)
\end{equation}

\begin{equation}\label{mathbbmbr}
\mathbbm b_{0r}=-6k_0^6\sigma''(0)^2\delta_c\alpha^{-1},
\end{equation}
\begin{equation}\label{mathbbmbi}
\mathbbm b_{0i}=k_0^4\sigma''(0)^2(-\sigma'(0)k_0^2\omega_c^{-1}+\omega_c-16k_0^{8}\omega_c^{-1}a_c(3-a_c))\alpha^{-1}
\end{equation}
where $\alpha=\alpha(k_0^2)=\Big(-2\sigma'(0)k_0^2+2\omega_c^2-32k_0^8a_c(3-a_c)\Big)^2+144k_0^4\omega_c^2\delta_c^2$.
 \begin{equation}\label{mathbbmc}
 \mathbbm c_{0r}=0,\,\, \mathbbm c_{0i}=\frac{-k_0^2\sigma''(0)^2}{2\omega_c(\sigma'(0)-16a_ck_0^4(\delta_c-4k_0^2))}+\frac{\sigma'''(0)k_0^2}{2\omega_c}.
 \end{equation}\\

\textbf{step 10}. We conclude that the dynamics on the center manifold(s) of the system \eqref{bifurcation equation} has the following form

\begin{equation}\label{normal}
\begin{cases}
\frac{d}{dt}z_1=i\omega_c z_1+ z_1\Big(\mathbbm{a}(\mu_1,\mu_2)+\mathbbm{b}_0|z_1|^2+\mathbbm{c}_0|z_2|^2+O(|\mu_1|,|\mu_2|, |z_1|, |z_2|)\Big)\\
\frac{d}{dt}z_2=i\omega_c z_2+z_2\Big(\mathbbm{a}(\mu_1,\mu_2)+\mathbbm{b}_0|z_2|^2+\mathbbm{c}_0|z_1|^2)+O(|\mu_1|,|\mu_2|, |z_1|, |z_2|)\Big)
\end{cases}
\end{equation}
where $\mathbbm b_0$ and $\mathbbm c_0$ do not depend on $\mu_1$ or $\mu_2$. The term $O(|\mu_1|,|\mu_2|, |z_1|, z_2)$ is a sum of terms $O((|\mu_1|+|\mu_2|)(|z_1^2|+|z_2^2|))$, $O((|\mu_1^2|+|\mu_2^2|)(|z_1^2|+|z_2^2|))$ and $O(|z_1|^4+|z_2|^4)$. Terms like
$O((|\mu_1|+|\mu_2|)(|z_1^2|+|z_2^2|))$ and $O((|\mu_1^2|+|\mu_2^2|)(|z_1^2|+|z_2^2|))$ stem from the fact that $\mathbbm{b,c}$ depend on $\mu_1$ and $\mu_2$. It is important to notice that
the terms $O((|\mu_1|+|\mu_2|)(|z_1^2|+|z_2^2|))$ do play a role though not a dominant one in the analysis of bifurcation dynamics. 
 
The analysis of the finite dimensional dynamical system obtained in equation \ref{normal} is routine. However, we shall be aware of the subtle differences of the current case from the standard situations. To this end, we
need to study the truncated normal form \eqref{normal} at cubic order following
\cite{Ch, HI, IA}.  Here our analysis refines that of \cite{HI} (see page 133) in the current two bifurcation parameters situation.
We will see that the signs of the parameters $\mathbbm a$, $\mathbbm b_0$ and $\mathbbm c_0$ play the key role for the bifurcation dynamics.
To proceed, we introduce the polar coordinates $z_1=r_1\exp(i\theta_1)$ and $z_2=r_2\exp(i\theta_2)$,
the truncated normal form at cubic order becomes
\begin{equation}\label{rd}
\begin{cases}
\frac{dr_1}{dt}=r_1 \Big(\mathbbm{a}_r+(\mathbbm{b}_{0r}+O_r(|\mu_1|+|\mu_2|)) r_1^2 +(\mathbbm{c}_{0r}+O_r(|\mu_1|+|\mu_2|))  r_2^2\Big),\\
\frac{dr_2}{dt}=r_2 \Big(\mathbbm{a}_r+(\mathbbm{b}_{0r}+O_r(|\mu_1|+|\mu_2|)) r_2^2 +(\mathbbm{c}_{0r}+O_r(|\mu_1|+|\mu_2|)) r_1^2\Big),\\
\frac{d\theta_1}{dt}=\omega_c+\mathbbm{a}_i +(\mathbbm{b}_{0i}+O_i(|\mu_1|+|\mu_2|)) r_1^2+(\mathbbm{c}_{0i}+O_i(|\mu_1|+|\mu_2|)) r_2^2,\\
\frac{d\theta_2}{dt}=\omega_c+\mathbbm{a}_i +(\mathbbm{b}_{0i}+O_i(|\mu_1|+|\mu_2|)) r_2^2+(\mathbbm{c}_{0i}+O_i(|\mu_1|+|\mu_2|)) r_1^2,\\
\end{cases}
\end{equation}
In the above, $\mathbbm{b}_{0r}+O_r(|\mu_1|+|\mu_2|)$ and $\mathbbm{c}_{0r}+O_r(|\mu_1|+|\mu_2|)$ are nothing else but the real parts of $\mathbbm{b,c}$ respectively. Similarly, $\mathbbm{b}_{0i}+O_i(|\mu_1|+|\mu_2|)$ and $\mathbbm{c}_{0i}+O_i(|\mu_1|+|\mu_2|)$ are the imaginary parts of $\mathbbm{b,c}$ respectively.

The equations on $(r_1, r_2)$ and $(\theta_1, \theta_2)$ decouple. However, we can not conclude in general by $\mathbbm{c}_{0r}=0$ that the equations on $r_1$ and $r_2$ also decouples due to the coefficients $O_r(|\mu_1|+|\mu_2|)$. The point is that we can first seek bifurcated equilibria in the radial equations by assuming $r_1\equiv0$ or $r_2\equiv0$.
Consider the parameter $\vartheta=-\mu_1 k_0^2+\mu_2$. Of course $\vartheta$ depends on $k_0$. We will call the lines such that $\vartheta=0$ lines of degeneracy. Each pair $\pm k_0$ corresponds to one of these lines. In the $(\mu_1, \mu_2)$-plane, these lines with slopes $k_0^2$ and passing the first and third quadratures divide the coordinate plane into half planes. For us, we always do analysis in a small neighborhood of $(\mu_1,\mu_2)=(0,0)$ in $\mathbbm R^2$. Therefore, these lines of degeneracy divide the neighborhood into halves. 

Next, we let $k_0$ be arbitrary but fixed and do a analysis away from the line of degeneracy associated with $k_0$. 
Consider the auxiliary real function $f(r)=\frac{k_0^2}{2}\vartheta+(b+O(|\vartheta|))r^2$ defined on $[0,\epsilon]$ for some small $\epsilon>0$. If $\vartheta b>0$, $f$ has the only trivial root $r=0$. If
$\vartheta b<0$, $f$ has a nontrivial root $r=\sqrt{\frac{-k_0^2\vartheta}{2(b+O(|\vartheta|))}}=O(|\vartheta|^{1/2})=O((|\mu_1|+|\mu_2|)^{1/2})$. We emphasize that $|\vartheta|$ is small. Now, consider the right hand sides of the radial equations of $r_1$ and $r_2$. If we let $r_2\equiv 0$, then the right hand side of equation on $r_1$ contains a factor with the same structure as $f(r)$. We can consider the $r_2$ equation similarly. Hence the above analysis for $f(r)$ applies.  Let $r_{*}(\vartheta)=r_{*}(\mu_1,\mu_2)=\sqrt{\frac{-k_0^2\vartheta}{2(\mathbbm b_{0r}+O(|\vartheta|))}}$. We conclude that besides the trivial solution $(0,0)$, there are bifurcated solutions of the forms $(r_{*}(\vartheta), 0)$, and $(0, r_{*}(\vartheta))$. Further, if $\mathbbm b_{0r}+\mathbbm c_{0r}$ does not vanish, we may consider the situation $r_1\equiv r_2$. In such a situation, the $r_1$ and $r_2$ equations are the same and both contain a factor of the form $f(r)$ in the right hand sides. Notice that we have $\mathbbm c_{0r}=0$. Therefore, there are bifurcated solutions of the form $( r_{*}(\vartheta), r_{*}(\vartheta))$. We are in a position to analyze the two angular equations, which is easy now. From above analysis, we know that all the three families of bifurcated solutions have magnitude $O(|\vartheta|^{1/2})$. As a consequence, we could arrange that $\frac{d\theta_1}{dt}\geq\frac{\omega_c}{2}>0$ and $\frac{d\theta_2}{dt}\geq\frac{\omega_c}{2}>0$ when $|\vartheta|$ remains small, which enables us to conclude that all the three families of bifurcated equilibria correspond to genuine periodic waves of the system \eqref{rd}. The equilibria $(r_{*}(\vartheta), 0)$ and $(0, r_{*}(\vartheta))$ correspond to rotating waves on $r_1$-axis and $r_2$-axis, which is the same as for the Hopf bifurcation with $SO(2)$ symmetry. The symmetry $S$ plays the role of exchanging the two axes, i.e., exchanging the rotating waves corresponding to $r_2=0$ into the rotating waves corresponding to $r_1=0$. The equilibria $(r_{*}(\vartheta), r_{*}(\vartheta))$ with $r_1=r_2$ correspond to standing waves, another class of bifurcating periodic solutions.  These waves correspond to a torus of solutions of the normal form
\begin{align*}
V_0(t,\mu_1,\mu_2, \delta_0,\delta_1)&=r_{*}(\mu_1,\mu_2)\Big( \exp(i\omega_{*}(\mu_1,\mu_2)t+\delta_0)\xi_0+ \exp(i\omega_{*}(\mu_1,\mu_2)t+\delta_1)\xi_1\Big)\\
&+r_{*}(\mu_1,\mu_2)\Big( \exp(-i\omega_{*}(\mu_1,\mu_2)t+\delta_0)\xi_0^*+ \exp(i\omega_{*}(\mu_1,\mu_2)t+\delta_1)\xi_1^*\Big)
\end{align*}
for any $(\delta_1,\delta_2)\in\mathbb{R}^2$, which induces a torus
of solutions $U(t,\mu_1,\mu_2, \delta_1,\delta_2)$ in $Y$ of the nonlinear
perturbation system \eqref{perturb}. The $\omega_{*}(\mu_1,\mu_1)$ is the phase
function determined by the $(\theta_1, \theta_2)$ equation in system
\eqref{rd} such that $\omega_{*}(0, 0)=\omega_c$. These standing waves in
addition possess the following symmetry
$$T_{\delta_1-\delta_0}SU(t,\mu_1,\mu_2, \delta_0,\delta_1)=U(t,\mu_1,\mu_2, \delta_0,\delta_1), \,\, T_{2\pi}U(t,\mu_1,\mu_2, \delta_0,\delta_1)=U(t,\mu_1,\mu_2 ,\delta_0,\delta_1),$$
$$T_{\pi}U(t,\mu_1,\mu_2, \delta_0,\delta_1)=U(t+\frac{\pi}{\omega_{*}(\mu_1,\mu_2)}, \mu_1,\mu_2 \delta_0, \delta_1), \,\, SU(t,\mu_1,\mu_2, \delta_0, \delta_0)=U(t,\mu_1,\mu_2, \delta_0, \delta_0).$$
The analysis of the stability of the three families of bifurcated waves are straightforward by examining the sign of $\mathbbm b_{0r}$ in which the roles of the two numbers $\sigma''(0)$ and $\delta_c$ are essential. Another striking fact is that $\sigma''(0)$ enters the reduced dynamics on the center manifold(s) through $\mathbbm b_0$ and $\mathbbm c_0$ in the form of $\sigma''(0)^2$. Now we complete
the analysis of the bifurcation dynamics and the paper.

\end{document}